\theoremstyle{plain}
\newtheorem{theorem}{Theorem}
\newtheorem{lemma}[theorem]{Lemma}
\newtheorem{proposition}[theorem]{Proposition}
\newtheorem{corollary}[theorem]{Corollary}
\newtheorem{remark}[theorem]{Remark}
\numberwithin{theorem}{section}
\numberwithin{equation}{section}
\newcommand{\C}{\mathbb{C}}
 \newcommand{\R}{\mathbb{R}}  
\newcommand{\cO}{\mathcal{O}}
\newcommand{\wG}{\widetilde{G}}
\begin{document}

\title[Atomic Decomposition]{New atomic decompositions of Bergman spaces on bounded symmetric domains}
\subjclass[2010]{32A36,32A50} 
\keywords{Bounded domains, Bergman spaces, atomic decomposition}

\author{Jens Gerlach Christensen} 
\address{ Department of Mathematics, Colgate University} 
\email{jchristensen@colgate.edu}
\urladdr{http://www.math.colgate.edu/~jchristensen}

\author{Gestur \'Olafsson} 
\address{ Department of Mathematics, Louisiana State University} 
\email{olafsson@math.lsu.edu}
\urladdr{http://www.math.lsu.edu/~olafsson}
\thanks{The research was partially supported by NSF grant   DMS 1321794 during the MRC program 
  \textit{Lie Group Representations, Discretization, and Gelfand Pairs.}
  The research of G. \'Olafsson was also partially supported
by Simons grant 586106} 

\begin{abstract}
  We provide a large family of atoms for Bergman spaces on irreducible bounded
  symmetric domains.   The atomic decompositions are derived using the holomorphic
  discrete series representations for the domain, and the approach
  is inspired by recent advances in wavelet and coorbit theory.
  Our results vastly generalize previous work  by Coifman and Rochberg.
  Their  atoms correspond to
    translates of a constant function at a discrete subset of the
    automorphism group of the domain. In this paper we show that
    atoms can be obtained as translates
    of any holomorphic function with rapidly decreasing coefficients
    (including polynomials).
  This approach also settles the relation between atomic decompositions
  for the bounded and unbounded realizations of the domain.
\end{abstract}

\maketitle

\section{Introduction}
This paper is concerned with providing
atomic decompositions of Bergman spaces on bounded symmetric domains.
The results extend similar results by Coifman and Rochberg
\cite{Coifman1980} carried out for Bergman spaces on the unbounded realization of the domain
and on the unit ball. Coifman and Rochberg asked if their decompositions
would hold for the bounded domains, and in this paper we give a positive answer to this
question. Moreover, we rectify an issue occuring in higher rank spaces which
was pointed out in a remark on p. 614 in \cite{Bekolle1986}
(see also Remark 4 in \cite{Faraut1990}).
While an extension to the bounded domains
was predicted by Faraut and Koranyi in the introduction of
\cite{Faraut1990}
our results provide a much larger class of
atoms than have previously been discovered.
The usual atomic decompositions of Bergman spaces
arise from a discretization
of the integral reproducing formula, and atoms can thus be regarded
as samples of the Bergman kernel in one of the variables.
It turns out that this result can be formulated in terms of 
the holomorphic discrete series representation, in which case
the classical atoms correspond to letting a discrete subset
of the group of isometries act on a constant function.
This viewpoint is extended widely in this paper where we
show that atoms can be obtained by translates of any polynomial, 
or more generally, any analytic function with rapidly decreasing
coefficients.  Similar results have been obtained
by Pap \cite{Pap2012} on the unit disc (using the Blaschke group)
and by the authors and their collaborators
\cite{Christensen2017,Christensen2019} in the case of the unit ball. 

In the last section of the article we  use representation theory to   explain
the relation between atomic decompositions of Bergman spaces in
the bounded and the unbounded realization. 
This issue was unresolved in \cite{Coifman1980} and therefore
their Theorem 2 did not transfer directly to bounded domains for
all parameters (see beginning of \S 2 on p. 14 in \cite{Coifman1980}).
Representation theory lifts the construction of atomic decompositions
from discretizing a reproducing
formula on the domain to discretizing a convolution
reproducing formula of matrix coefficients for the group of isometries.
In particular, we use that the matrix coefficients have the same decay for
the two realizations of the domain.

The motivation for our  approach  comes from coorbit theory for 
invariant Banach spaces of functions and distributions which was
initiated by Feichtinger and Gr\"ochenig \cite{Feichtinger1988,Feichtinger1989}
and subsequently generalized in
\cite{Christensen1996,Christensen2009,Christensen2010,Christensen2011,Christensen2017,Christensen2019}
in order to treat a wider class of (projective) representations. In this paper we have
chosen to state most results without referring to 
coorbit theory. The main reason for doing so is 
that some results can be stated with fewer restrictions than if coorbit theory were used, and also the paper
would be a lot longer if we had to introduce the entire coorbit machinery.

\section{Main results}
In this section we state the main results of this article. The following sections will then be devoted to
proving those statements. We also introduce the minimal set of notation that will be
needed to formulate the statements. 

Let $D$ be a bounded symmetric domain in $\mathbb{C}^n$
containing the origin $o$. Let $G$ be a connected Lie group locally isomorphic with the group of
isometries of $D$. Then $G$ acts transitively on $D$ and $D=G/K$ where $K$ is
the stabilizer of $o$, $K=\{g\in G\mid g\cdot o=o\}$.  If $G$ is linear as we will always assume,
or more generally, with finite center, then $K$ is compact.

If nothing else is stated, let $dz$ denote the normalized Euclidean measure on $D$ and define
$A^2(D)$ to be the Hilbert space of  holomorphic functions $f:D\to \mathbb{C}$ for which
$$
\| f\|_{A^2} = \left( \int_D |f(z)|^2\,dz \right)^{1/2}<\infty.
$$
Thus $A^2(D)=\cO (D)\cap L^2(D,dz)$. The space
$A^2(D)$ is a reproducing kernel Hilbert space. Denote
the reproducing kernel (Bergman kernel) by $K(z,w)$ so that
$$
f(z) = \int_D f(w) K(z,w)\,dw
$$
for $f\in A^2(D)$. The map $K(z,w)$ is holomorphic in the first variable and
antiholomorphic in the second variable. Furthermore $K(w,z)=\overline{K(z,w)}$.

Let $J(x,z)$ be the complex Jacobian of the $G$-action on $D$ at the point
$z$. Then $z\mapsto J(x,z)$ is holomorphic for every $x\in G$,
\begin{equation}\label{eq:TransForm}
\int_D f(x\cdot z)|J(x,z)|^2dz = \int_D f(z)dz\quad\text{for all } f\in L^1(D),
\end{equation}
and the chain rule implies that
\begin{equation}\label{eq:ChainRule}
J(xy,z)=J(x,y\cdot z)J(y,z).
\end{equation}
In particular, since $J(e,z)=1$ as $e$ acts trivally, it follows that
\begin{equation}\label{eq:resipro}
J(x^{-1},x\cdot z)=J(x,z)^{-1} .
\end{equation}
Furthermore it follows that if $f\in A ^2 (D)$ then $J(x,z)f(x\cdot z)$ is also in $A^2(D)$ which implies by simple
calculation using the reproducing property that,
see Lemma \ref{lem:2.1},
\begin{equation}\label{eq:TransRule}
J(x,z)\overline{J(x,w)}K(x\cdot z,x\cdot w) = K(z,w) .
\end{equation}

The genus of $D$ is the number
$g:=(n+n_1)/r$
where $n=\dim_\C D$, $n_1$ is the complex dimension of the maximal complex subdomain of $D$ of
tube type and $r$ is the rank of $D$, the dimension of a maximal totally geodesic euclidean submanifold.
Lastly, let $a$ be a structural constant which satisfies $r(r-1)\frac{a}{2} = n_1 - r$.
Define the kernel $h(z,w) = K(z,w)^{-1/g}$, and
by abuse of notation let $h(z) = h(z,z)$.
For $x,y\in G$ also let
$h(x,y)=h(x\cdot 0,y\cdot 0)$ and $h(x) = h(x,x)$.
Note that by definition $h(x,y)$ is right $K$-invariant.

The function $ h(z,z)^{\gamma - g} =K(z,z)^{1-\gamma / g}$ is integrable
if and only if $\gamma >g-1$. For $\gamma >g-1$  let $c_\gamma^{-1}=\int_D h (z,z) ^{\gamma -g}dz$ and
$d\mu_\gamma (z)=c_\gamma h  (z,z)^{\gamma -g}\, dz$. Then $\mu_\gamma$ is a quasi-invariant 
probability measure on $D$.

Let $L^p_\gamma(D)$ be the space of equivalence classes of
measurable functions on $D$ for which
$$
\| f\|_{L^p_\gamma(D) } := \left( \int_D |f(z)|^p d\mu_\gamma (z)\right)^{1/p}< \infty.
$$
Define the weighted Bergman space
$A^p_\gamma (D)$ to be the subspace 
of holomorphic functions in $L^p_\gamma(D)$, i.e. $A_\gamma^p(D)=L^p_\gamma (D)\cap \cO (D)$ with norm inherited from $L^p_\gamma(D)$. We note that
the space of holomorphic polynomials $P[\C^n]$ is dense in $A_\gamma^p(D)$. The spacet $A_\gamma^p(D)$ is
a reproducing kernel Banach space.
Denote the reproducing kernel by $K_\gamma(z,w)$, which then satisfies
$$
f(z) = \int_D f(w)K_\gamma(z,w)\, d\mu_\gamma (w) 
$$
for $f\in A^p_\gamma(D)$. See for example Theorem 3 in \cite{Stoll1977}. We
have
\[K_\gamma (z ,w)=h(z , w)^{-\gamma} = K(z,w)^{\gamma/g}.\]

Define $j_\gamma (x,z)=J(x,z)^{\gamma /g}$. Note that $j_\gamma (x,z)$ is not necessarily defined globally on
the group $G$, but it is always defined on the universal covering group $\widetilde{G}$ of $G$. The function $j_\gamma$ satisfies
the cocycle relation $j_\gamma (xy,z)=j_\gamma (x,y\cdot z)j_\gamma (y,z)$. Furthermore
\[j_\gamma(x,z)\overline{j_\gamma (x,w)}K_\gamma (x\cdot z,x\cdot w)=K_\gamma (z,w) .\]
 
 This implies that  
$$
\pi_\gamma(x)f(z) = j_\gamma (x^{-1},z ) f(x^{-1}\cdot z).
$$
defines a representation of $\wG$ on $A_\gamma^2$ (or a projective representation
of $G$) and (\ref{eq:TransForm}) implies that the representation is unitary. 
It is well known that it is also irreducible. 

For a holomorphic function $f$ on $D$ decompose it into
homogeneous polynomials $f_k$ of degree $k$, i.e. 
$$
f = \sum_{k\geq 0} f_k.
$$
Define the holomorphic functions with rapidly decreasing coefficients $S_\gamma$
by
$$
S_\gamma = \{ f = \sum_{k\geq 0} f_k \mid \forall N,\, \exists C: 
\| f_k\|_{A^2_\gamma} \leq C(1+k)^{-N}  \},
$$
and the space of holomorphic functions with moderately growing coefficients
$S_\gamma^*$ by
$$
S^*_\gamma = \{ f=\sum_{k\geq 0} f_k \mid \exists N,C: 
\| f_k\|_{A^2_\gamma} \leq C(1+k)^{N}  \}.
$$
The space $S_\gamma$ is invariant under $\pi_\gamma$, and
$\pi_\gamma$ is a projective representation
of $G$ on $S_\gamma$. The space $S^*_\gamma$ is the dual of $S_\gamma$ and the dual pairing
of $f\in S^*_\gamma$ and $g\in S_\gamma$
is given by
$$
\langle f,g\rangle_\gamma = \sum_{k\geq 0} \int_D f_k(z)\overline{g_k(z)}\,d\mu_\gamma(z).
$$
For $\psi\in S_\gamma$ define the wavelet transform
$W_\psi^\gamma: S_\gamma^* \to C(\widetilde{G})$
by
$$
W_\psi^\gamma(f)(x) = \langle f,\pi_\gamma(x)\psi \rangle_\gamma.
$$
Notice that the function $|W^\gamma_\psi(f)|$ defines a continuous
function on $G$, so we will often allow ourselves to
write $|W_\psi^\gamma(f)(x)|$ for $x\in G$.
Also, the wavelet transform
is injective if $\psi$ is non-zero, due to the fact that
the representation $\pi_\gamma$ restricted to $S_\gamma$ is irreducible.
This follows since $S_\gamma$ are the smooth
vectors for the representation $\pi_\gamma$ by \cite{Chebli2004}, and 
by \cite[Proposition 2.6]{Bruhat1956} $\pi_\gamma$
restricted to the smooth vectors is irreducible if and only
if $\pi_\gamma$ is irreducible on $A_\gamma^2(D)$.

Define $L^p_\alpha(G)$ as the space of equivalence classes
of functions on the group $G$ for which
$$
\| f\|_{L^p_\alpha(G) } :=\left( \int_G |f(x)|^p h(x)^{\alpha}\,d\mu_G(x) \right)^{1/p}<\infty
$$
where $d\mu_G(x)=dx$ denotes the suitably normalized left-invariant Haar measure on $G$.

We are now ready to present the first result of this paper, which
gives a wavelet characterization of the Bergman spaces.
\begin{theorem}
  \label{thm:waveletcharbergman}
  A function $f$ is in the Bergman space $A^p_\alpha(D)$
  if and only if $f\in S_\gamma^*$ and 
  $W_\psi^\gamma (f) \in L^p_{\alpha -\gamma p/2}(G)$
  and either of the following two conditions are satisfied
  \begin{enumerate}
  \item $\gamma,\alpha >g-1$ and
    and $\psi$ is a non-zero constant,
  \item $\gamma > g-1 + (r-1)\frac{a}{2}$ and
    $g-1 - (r-1)\frac{a}{2} + p(r-1)\frac{a}{2} 
    < \alpha 
    < g-1 -(r-1)\frac{a}{2} + p(\gamma-g+1)$ and $\psi \in S_\gamma$ is non-zero.
  \end{enumerate}
  Moreover, the norms $\| f\|_{A^p_\alpha}$ and $\| W^\gamma_\psi(f)\|_{L^p_{\alpha-\gamma p/2}}(G)$ are equivalent.
\end{theorem}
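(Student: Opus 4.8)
The plan is to reduce both cases to the constant window, for which the wavelet transform can be computed explicitly, and then to transport the result to an arbitrary window by a change-of-window (reproducing) formula whose boundedness is governed by a weighted convolution estimate. Throughout, the injectivity of $W_\psi^\gamma$ on $S_\gamma^*$ (already recorded) lets me recover $f$ from its transform, so the content is the two-sided norm estimate.

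First I would dispatch case (1) by direct computation. Taking $\psi$ a nonzero constant and using that pairing against $1$ in $\langle\cdot,\cdot\rangle_\gamma$ reproduces the value at $o$, the covariance of the transform gives, up to a constant, $W_1^\gamma(f)(x)=\langle \pi_\gamma(x^{-1})f,1\rangle_\gamma = j_\gamma(x,o)\,f(x\cdot o)$. Setting $z=w=o$ in the transformation rule \eqref{eq:TransRule} yields $|j_\gamma(x,o)|^2 = c\,h(x)^{\gamma}$, so that $|W_1^\gamma(f)(x)|^p h(x)^{\alpha-\gamma p/2} = c'\,|f(x\cdot o)|^p h(x)^{\alpha}$. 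Since this integrand is right $K$-invariant, integrating over $G$ and pushing down to $D=G/K$ against the invariant measure $h(z,z)^{-g}\,dz$ converts the $G$-integral into $\int_D |f(z)|^p h(z,z)^{\alpha-g}\,dz$, a constant multiple of $\|f\|_{A^p_\alpha}^p$. The hypotheses $\gamma,\alpha>g-1$ are exactly the integrability thresholds making $d\mu_\gamma$ and this final integral finite; the only residual point is that $f\in S_\gamma^*$ with finite transform norm must be holomorphic, which follows from the reconstruction formula below.

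For case (2) I would first establish the orthogonality relations for the square-integrable representation $\pi_\gamma$: for nonzero $\psi\in S_\gamma$ one has the reproducing identity $W_\psi^\gamma(f)=c_\psi^{-1}\,W_\psi^\gamma(f)*W_\psi^\gamma(\psi)$ on $\widetilde{G}$, and more generally the change-of-window identities $W_\psi^\gamma(f)=c^{-1}W_1^\gamma(f)*W_\psi^\gamma(1)$ and $W_1^\gamma(f)=c^{-1}W_\psi^\gamma(f)*W_1^\gamma(\psi)$, where the projective cocycle contributes only phases and so disappears in modulus. Granting these, equivalence of $\|W_\psi^\gamma(f)\|_{L^p_{\alpha-\gamma p/2}}$ with $\|W_1^\gamma(f)\|_{L^p_{\alpha-\gamma p/2}}$, hence via case (1) with $\|f\|_{A^p_\alpha}$, reduces to a weighted Young inequality: convolution by the cross kernels $W_\psi^\gamma(1)$ and $W_1^\gamma(\psi)$ must be bounded on $L^p_{\alpha-\gamma p/2}(G)$. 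Writing the weight as $h(x)^{\beta}$ with $\beta=\alpha-\gamma p/2$ and using that $h$ is, up to the domain geometry, submultiplicative, this boundedness holds once the kernels lie in the weighted convolution algebra $L^1_v(G)$ for the associated $\beta$-moderate weight $v$.

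The crux is therefore the decay estimate for the matrix coefficients $|W_\psi^\gamma(\phi)(x)|=|\langle \phi,\pi_\gamma(x)\psi\rangle_\gamma|$ with $\phi,\psi\in S_\gamma$. The natural scale of decay is $h(x)^{\gamma/2}$, coming from the norm of $\pi_\gamma(x)\psi$, refined by a rapidly decreasing factor because $\psi$ has rapidly decreasing homogeneous coefficients; in rank one this is enough, but in higher rank the sharp bound is dictated by the polydisc and spherical-function structure and picks up the correction $(r-1)\tfrac{a}{2}$. This is exactly the higher-rank subtlety flagged in the introduction, and it is the step I expect to be the main obstacle: one must show $W_\psi^\gamma(1),\,W_1^\gamma(\psi)\in L^1_v(G)$, and requiring $h(x)^{\gamma/2}\times(\text{correction})$ to be integrable against $v$ forces the threshold $\gamma>g-1+(r-1)\tfrac{a}{2}$, while the two-sided $\beta$-moderateness of $v$ forces the interval $g-1-(r-1)\tfrac{a}{2}+p(r-1)\tfrac{a}{2}<\alpha<g-1-(r-1)\tfrac{a}{2}+p(\gamma-g+1)$. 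Once these containments are in hand, the weighted Young inequality delivers both directions of the norm equivalence and, together with the injectivity of $W_\psi^\gamma$, closes the characterization.
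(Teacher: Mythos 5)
Your overall skeleton coincides with the paper's: an explicit computation for the constant window, a change-of-window convolution identity, and a weighted convolution bound on $G$. Your treatment of case (1) is essentially the paper's, except that you worry about the wrong residual point: holomorphy of elements of $S_\gamma^*$ is true by definition, whereas the containment $A^p_\alpha\subseteq S_\gamma^*$ (needed for the ``only if'' direction, so that $W^\gamma_\psi(f)$ is even defined) is a genuine lemma, proved in the paper via the Chebli--Faraut estimates on ratios of monomial norms (Lemma~\ref{lem:bergmanindual}). The serious gaps are both in case (2).

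First, the change-of-window identity. You invoke ``orthogonality relations for the square-integrable representation'' to write $W_1^\gamma(f)=c^{-1}W_\psi^\gamma(f)*W_1^\gamma(\psi)$ for $f\in S_\gamma^*$. Orthogonality relations hold for vectors in the Hilbert space $A^2_\gamma$; here $f$ is merely a distribution in $S_\gamma^*$ whose transform lies in a weighted $L^p$ space, and extending the identity to that setting is precisely the paper's hardest step (Proposition~\ref{prop:swapvector}). It requires (a) knowing beforehand that the convolution converges absolutely, which is Lemma~\ref{lem:twistedconv} and already consumes the decay estimate of Proposition~\ref{matrixdecay} together with Theorem~\ref{thm:convoperator}, and (b) an approximation argument with smooth cutoffs, the Kunze--Stein phenomenon, and convergence in the topology of $S_\gamma$ so that the pairing against $f$ passes to the limit. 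Moreover the identity holds for the \emph{twisted} convolution $\#_\gamma$; dropping the cocycle ``in modulus'' is harmless for norm estimates but not for the exact reproducing identity you are granting yourself.

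Second, and this is the step that actually fails, the convolution bound. A weighted Young inequality (kernel in $L^1_v$ for a moderate weight $v$) cannot yield the stated range. By Lemma~\ref{le:hxhy}, $h(y^{-1}x)=h(x)h(y)/|h(x,y)|^2$, and since the off-diagonal kernel $|h(z,w)|$ is bounded above but degenerates at the boundary, the best available weight ratio is $\sup_x h(xu)/h(x)\asymp h(u)^{-1}$; any Young-type argument must take this supremum, and the resulting $L^1_v$ condition on $h^{\gamma/2}$ produces thresholds such as $\alpha>p(g-1)$ or $\alpha>g-1+\gamma(p-1)/2$ (depending on the setup), which for $p>1$ are strictly stronger than the theorem's lower bound already in rank one, where the theorem asserts the full range $g-1<\alpha<g-1+p(\gamma-g+1)$. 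The sharp thresholds, with their $(r-1)\frac{a}{2}$ corrections, do not come from moderateness of the weight at all: they come from the Faraut--Kor\'anyi (Forelli--Rudin) asymptotics of Theorem~\ref{thm:FK} fed into a Schur-test argument with power-weight test functions (Theorem~\ref{thm:bekollekagou}), which is then transferred to the group (Theorem~\ref{thm:convoperator}). Relatedly, the decay you anticipate, $h^{\gamma/2}$ times a rapidly decreasing correction, is not what holds or what is needed: the paper proves $|W^\gamma_u(v)|\le C\,h^{\gamma/2}*h^{\gamma/2}$ for general $u,v\in S_\gamma$, with the bound $Ch^{\gamma/2}$ only when one of the vectors is a polynomial (Proposition~\ref{matrixdecay}), and you propose no mechanism for establishing any such estimate.
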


\begin{remark}
  \label{rem:waveletchar}
  (1)
  If the rank of the space is large, then $\gamma$ has to be chosen
  sufficiently large in order to have the second part of the wavelet
  description.
  
  (2) We conjecture that it is possible to avoid this restriction and 
  to show that the Bergman spaces $A^p_\alpha$ have
  a wavelet characterization for the entire range $g-1 < \alpha < p(\gamma-g+1)+g-1$
  for general $\psi\in S_\gamma$.

  (3)
  Due to the coorbit theory for projective representations
  in \cite{Christensen2019} the collection
  \begin{equation}\label{eq:coorbits}
    \mathrm{Co}_\psi^\gamma L^p_{\alpha -\gamma p/2}(G) =\{
    f\in S_\gamma^* \mid W_\psi^\gamma(f) \in L^p_{\alpha -\gamma p/2}(G) \}
  \end{equation}
  is a non-trivial Banach space for the entire range
  of $g-1<\alpha< g-1+p(\gamma - g +1)$ when $\psi$ is a polynomial.
  By Theorem~\ref{thm:waveletcharbergman} these spaces are Bergman spaces
  for $\alpha$ in the smaller interval from
  Theorem~\ref{thm:waveletcharbergman}, and we expect that this is also
  true in the larger interval.

  (4)
  Similarly the coorbits (\ref{eq:coorbits}) are non-empty
  Banach spaces for $\psi\in S_\gamma$ when
  $g-1-(r-1)a/2<\alpha < g-1-(r-1)a/2+p(\gamma - g +1)$.
  We expect this to be true for the entire range
  $g-1 <\alpha < g-1+p(\gamma - g +1)$.
  In order to improve on this result
  we need a better understanding of the behaviour of wavelet coefficients
  $W^\gamma_\psi(\phi)$ for general $\psi,\phi\in S_\gamma$ than
  we use in this paper (Proposition~\ref{matrixdecay}).
\end{remark}

For a fixed countable collection of points $\{ x_i\}_{i\in I}$ in $G$
define the weighted
sequence space $\ell^p_\alpha(I)$ as the sequences $\{ \lambda_i\}$
for which the norm
$$
\| \{ \lambda_i\}\|_{\ell^p_\alpha} := \left( \sum_{i\in I} |\lambda_i|^p h(x_i)^\alpha\right)^{1/p}
$$
is finite.
We are now ready to state the main result of this paper which provides
atomic decompositions for Bergman spaces with atoms $\pi_\gamma(x_i)\psi$ for appropriately chosen $x_i$ in $G$.
\begin{theorem}
  \label{thm:atomicdecompbergman}
  Assume that $\gamma>g-1+(r-1)\frac{a}{2}$ and $\psi\in S_\gamma$
  is non-zero. If  
  \[g-1 +(p-1)(r-1)\frac{a}{2} 
    < \alpha 
    < g-1 + p(\gamma-g+1)-(r-1)\frac{a}{2},\]
  there is a countable discrete collection of points
  $\{ x_i\}_{i \in I}$ in $G$
  and associated functionals $\{ \lambda_i\}$ on
  $A^p_\alpha$ 
  such that every $f\in A^p_\alpha$ can be written
  $$
  f = \sum_{i\in I} \lambda_i(f) \pi_\gamma(x_i)\psi
  $$
  with $\| \{ \lambda_i(f)\}\|_{\ell^p_{\alpha-\gamma p/2}}
  \leq \| f\|_{L^p_\alpha(D)}$.
  Moreover, if $\{ c_i\} \in \ell^p_{\alpha-\gamma p/2}$ then
  $$
  g := \sum_{i\in I} c_i \pi_\gamma(x_i)\psi
  $$
  is in $A^p_\alpha$ and 
  $\| g\|_{L^p_\alpha(D)} \leq \| \{ \lambda_i(g)\}\|_{\ell^p_{\alpha-\gamma p/2}}$.
\end{theorem}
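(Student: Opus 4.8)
The plan is to move the whole problem onto the group $G$ via the wavelet transform and then discretize a continuous reproducing formula, in the spirit of Feichtinger--Gr\"ochenig coorbit theory. The first observation is that the parameter range in Theorem~\ref{thm:atomicdecompbergman} coincides exactly with the range in part (2) of Theorem~\ref{thm:waveletcharbergman}: its lower endpoint $g-1+(p-1)(r-1)\tfrac a2$ equals $g-1-(r-1)\tfrac a2+p(r-1)\tfrac a2$, and its upper endpoint is literally $g-1-(r-1)\tfrac a2+p(\gamma-g+1)$. Hence, for the given nonzero $\psi\in S_\gamma$, Theorem~\ref{thm:waveletcharbergman} applies and says that $f\mapsto W^\gamma_\psi(f)$ maps $A^p_\alpha(D)$ onto a closed (reproducing-kernel) subspace of $L^p_{\alpha-\gamma p/2}(G)$ with $\|f\|_{A^p_\alpha}$ equivalent to $\|W^\gamma_\psi(f)\|_{L^p_{\alpha-\gamma p/2}(G)}$. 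Writing $\beta:=\alpha-\gamma p/2$, it therefore suffices to construct a discrete reconstruction on the image side and transport it back through this equivalence.

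Next I would record the continuous reproducing formula. Since $\pi_\gamma$ is a unitary irreducible square-integrable (holomorphic discrete series) representation, every nonzero $\psi$ is admissible, and the orthogonality relations give, after the formal-degree normalization, a convolution identity $W^\gamma_\psi(f)=W^\gamma_\psi(f)*K_\psi$, where $K_\psi$ is a constant multiple of the matrix coefficient $W^\gamma_\psi(\psi)(x)=\langle\psi,\pi_\gamma(x)\psi\rangle_\gamma$ and $*$ is convolution on $G$; this uses the covariance $|W^\gamma_\psi(\pi_\gamma(y)\psi)(x)|=|K_\psi(y^{-1}x)|$, which follows from the cocycle relation for $j_\gamma$ and the definition of $\pi_\gamma$ (the unimodular projective factor drops out once we pass to moduli on $G$). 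The synthesis half of the theorem is then essentially immediate: for $\{c_i\}\in\ell^p_\beta$ one has $W^\gamma_\psi\big(\sum_i c_i\,\pi_\gamma(x_i)\psi\big)(x)=\sum_i c_i\,(\text{unimodular})\,K_\psi(x_i^{-1}x)$, and a discrete-convolution estimate bounds its $L^p_\beta(G)$-norm by $\|\{c_i\}\|_{\ell^p_\beta}$, so membership in $A^p_\alpha$ and the synthesis norm inequality follow from Theorem~\ref{thm:waveletcharbergman}.

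For the analysis half I would fix a relatively separated, sufficiently dense family $\{x_i\}_{i\in I}$ subordinate to a bounded uniform partition of unity $\{\phi_i\}$ with cells $U_i$ of small diameter, and define the discretized operator
$$
Tf=\sum_{i\in I}\Big(\int_G\phi_i(y)\,W^\gamma_\psi(f)(y)\,dy\Big)\,\pi_\gamma(x_i)\psi .
$$
Comparing $W^\gamma_\psi(Tf)$ with the continuous reproducing formula for $W^\gamma_\psi(f)$, the error is governed by the local oscillation $\mathrm{osc}_U K_\psi(x)=\sup_{u\in U}|K_\psi(xu)-K_\psi(x)|$. The standard coorbit estimate yields $\|W^\gamma_\psi(f)-W^\gamma_\psi(Tf)\|_{L^p_\beta(G)}\le C\,\|\mathrm{osc}_U K_\psi\|_{\mathcal A}\,\|W^\gamma_\psi(f)\|_{L^p_\beta(G)}$, where $\mathcal A$ is the weighted convolution algebra controlling boundedness on $L^p_\beta(G)$. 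Shrinking the mesh of the partition drives this operator norm below $1$, so $T$ is invertible by a Neumann series; setting $\lambda_i(f):=\int_G\phi_i(y)\,W^\gamma_\psi(T^{-1}f)(y)\,dy$ produces the decomposition $f=\sum_i\lambda_i(f)\,\pi_\gamma(x_i)\psi$, and boundedness of $T^{-1}$ together with Theorem~\ref{thm:waveletcharbergman} gives $\|\{\lambda_i(f)\}\|_{\ell^p_\beta}\le C\,\|f\|_{A^p_\alpha}$.

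The main obstacle is the analytic input that makes this machinery run: one must show that both $K_\psi$ and its oscillation $\mathrm{osc}_U K_\psi$ lie in the weighted convolution algebra $\mathcal A$ associated with the weight $h(x)^\beta$, and that $\|\mathrm{osc}_U K_\psi\|_{\mathcal A}\to 0$ as $U$ shrinks. This is precisely where the decay of the matrix coefficients $W^\gamma_\psi(\psi)$ enters (Proposition~\ref{matrixdecay}) and where the two-sided restriction on $\alpha$ is used: the lower bound $\alpha>g-1+(p-1)(r-1)\tfrac a2$ and the upper bound $\alpha<g-1+p(\gamma-g+1)-(r-1)\tfrac a2$ are exactly the thresholds guaranteeing that $h^\beta$ is dominated by a submultiplicative weight against which $K_\psi$ is integrable, simultaneously in the left and right translation structure needed for both the analysis and synthesis convolution bounds. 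Establishing these kernel and oscillation estimates is the technical heart of the argument; once they are in place, the invertibility of $T$ and the two norm inequalities follow from the general discretization scheme, and the constants can be normalized to match the stated bounds.
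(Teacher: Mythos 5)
Your overall route --- transferring to $G$ via the wavelet transform (using the correct observation that your parameter range coincides with that of Theorem~\ref{thm:waveletcharbergman}(2)), a continuous reproducing convolution formula, discretization by a bounded uniform partition of unity, an oscillation estimate, and Neumann-series invertibility --- is exactly the paper's route (the paper packages the discretization step by invoking Theorem~8 of \cite{Christensen2019}). However, you have left unproven precisely the step you yourself call ``the technical heart'': that $\mathrm{osc}_U K_\psi$ acts boundedly by convolution on $L^p_{\alpha-\gamma p/2}(G)$ with operator norm tending to $0$ as $U$ shrinks. This is a genuine gap, and Proposition~\ref{matrixdecay} by itself cannot fill it: pointwise decay of the kernel $K_\psi=W^\gamma_\psi(\psi)$ says nothing about its local oscillation, and for general kernels membership in the convolution algebra does not force the oscillation norm to shrink. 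The missing idea --- which is how the paper closes this step --- is that $S_\gamma$ is exactly the space of smooth vectors of $\pi_\gamma$ (by \cite{Chebli2004}) and is invariant under the operators $\pi_\gamma(X)$, $X\in\mathfrak{g}$; consequently every left/right derivative of $W^\gamma_\psi(\psi)$ is again a wavelet coefficient $W^\gamma_{u'}(v')$ with $u',v'\in S_\gamma$. Proposition~\ref{matrixdecay} then bounds each such derivative by $h^{\gamma/2}*h^{\gamma/2}$, and Theorem~\ref{thm:convoperator} together with Corollary~\ref{cor:boundedconv} shows that convolution against these derivative kernels is bounded on $L^p_{\alpha-\gamma p/2}(G)$ exactly in your parameter range. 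Since the oscillation over a cell of diameter $\epsilon$ is dominated by $\epsilon$ times such derivative kernels, this is what makes the oscillation operator norm small and $T$ invertible; it is the verification of Assumption~3 of \cite{Christensen2019}, without which the ``standard coorbit estimate'' you cite has no hypothesis to stand on.

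A second, related defect: $\pi_\gamma$ is only a \emph{projective} representation of $G$ (it is a genuine representation only of $\widetilde{G}$, where square-integrability is unavailable), so the reproducing identity cannot be written with ordinary convolution on $G$. The unimodular phases cancel in the moduli --- which suffices for your norm estimates --- but not in the identity $W^\gamma_\psi(f)=W^\gamma_\psi(f)*K_\psi$ that your comparison of $Tf$ with $f$ requires. The paper instead works with the twisted convolution $\#_\gamma$ built from the cocycle $\sigma_\gamma$, and the reproducing formula $W^\gamma_\psi(f)\#W^\gamma_\psi(\psi)=W^\gamma_\psi(f)$ (after normalizing $\psi$) is itself nontrivial: since $f$ lies only in $S_\gamma^*$, one cannot simply quote the $L^2$ orthogonality relations, and the paper proves this as Proposition~\ref{prop:swapvector} via Lemma~\ref{lem:twistedconv}, dominated convergence, and the Kunze--Stein phenomenon. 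Correspondingly, the atoms on the group side are twisted translates $\ell^{\sigma_\gamma}_{x_i}W^\gamma_\psi(\psi)$ and the functionals carry the phase factor $\overline{\sigma_\gamma(y,y^{-1}x_i)}$; your functionals $\lambda_i(f)=\int\phi_i(y)\,W^\gamma_\psi(T^{-1}f)(y)\,dy$ omit this cocycle correction, and without it the reconstruction sum does not return $f$.
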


  \begin{remark}
  Notice that the range of parameters in Theorem~\ref{thm:atomicdecompbergman}
  is smaller than the range of parameters for which atomic decompositions
  have been found for tube type domains. See \cite{Christensen2018} in which
  atomic decompositions are constructed for tube type domains $T=V+i\Omega$
  by taking Laplace extensions of Besov spaces
  of distributions supported on the cone $\Omega$. 
  These atoms do not arise as samples of the Bergman kernel.
  Also see the Arxiv preprint \cite{Bekolle2017} where
  atoms are obtained for the same spaces but with atoms determined by
  the Bergman kernel.
\end{remark}

\begin{remark}
  In the case of $\psi=1$ it is possible to choose the points $x_i$ in a
  solvable subgroup of $G$ on which $\pi_\gamma$ is a representation.
  In this case the atomic decomposition becomes
  $$
  f(z)
  = \sum_{i\in I} \lambda_i(f) \left(\pi_\gamma(x_i)\psi\right)(z)
  = \sum_{i\in I} \lambda_i(f) \overline{j_\gamma (x_i,o)}
  K_\gamma (z,x_i\cdot o).$$
  By choosing $\alpha=g$ and $\gamma = 2g/p$
  we recover Theorem 1 of Coifman and Rochberg \cite{Coifman1980}
  for
  $$
  1\leq p < 1+ \max\left\{ \frac{2}{(r-1)a},\frac{g-(r-1)a/2}{g-1} \right\}.
  $$
  Namely, there are functionals $\widetilde{\lambda}_i$ such
  that $\| \{\widetilde{\lambda}_i(f)\} \|_{\ell^p} \leq \| f\|_{A^p(D)}$
  and
  $$
  f(z)= \sum_{i\in I} \widetilde{\lambda}_i(f)
  \left( \frac{K^2(z,w_i)}{K(w_i,w_i)}\right)^{1/p}
  $$
  for $w_i=x_i\cdot o$.
  Here the functionals
  $\widetilde{\lambda}_i$ differ from $\lambda_i$ by unimodular factors.

  For a parameter $\theta$ we can let $\gamma = (2\alpha + \theta g)/p$.
  Then we get the expansion from Theorem 2 in \cite{Coifman1980}
  for general bounded domains. Namely,
  there are functionals $\widehat{\lambda}_i$ such
  that $\| \{\widehat{\lambda}_i(f)\} \|_{\ell^p}
  \leq \| f\|_{A_\alpha^p(D)}$
  and
  $$
  f(z)= \sum_{i\in I} \widehat{\lambda}_i(f)
  \left( \frac{K^2(z,w_i)}{K(w_i,w_i)}\right)^{\frac{\alpha}{gp}}
  \left( \frac{K(z,w_i)}{K(w_i,w_i)}\right)^{\theta/p}
  $$
  for $w_i=x_i\cdot o$.
  Here the functionals $\widehat{\lambda}_i$
  differ from $h(w_i)^{-\frac{\theta g}{2p}}\lambda_i$ by
  unimodular factors.
\end{remark}

The special case of the unit ball in $\C^n$ was treated in
\cite{Christensen2019} and this article follows the same strategy and we will usually
use results from that article as far as the statements are stated in full generality or
the generalization is trivial.
\section{Background on Bergman spaces}

For $z\in D$ let $x_z\in G$ be such that $x_z\cdot 0=z$.
The element $x_z$ is not
unique because $x\cdot z=(xk)\cdot 0$ for all $k\in K$. 
The following is well known but we give a short discussion of the proof
as this will be used quite often in the following:

\begin{lemma}\label{lem:2.1}
  For $z,w\in D$ and $x\in G$ we have
  
  \noindent
  a) $J(x^{-1},x\cdot z)=J(x,z)^{-1}$.
  \smallskip
  
  \noindent
  b) $\displaystyle J(x,z)\overline{J(x,w)}K(x\cdot z,x\cdot w) =K(z,w)$.
  \smallskip
  
  \noindent
  c) $h(x) =|J(x,0)|^{2/g}$. 
  \smallskip
  
  \noindent
  d) The $G$-invariant measure on $D$ is, up to a positive constant, given by
\[
  f\mapsto \int_D f(z)h(z)^{-g} dz = \int_D f(z)K(z,z) dz , \quad f\in C_c(D).
\]
   \smallskip
  
  \noindent
e) The function $h$ is $K$-biinvariant and $h(x^{-1}) = h(x)$ for all $x\in G$.
\smallskip

\noindent
f) If $U$ is a compact subset of $G$ containing the identity then there exist constant $0<C_1<C_2$ such 
that
\[C_1h(x) \le \sup_{u\in U} h(xu)\le C_2 h(x)\quad \text{for all } x\in G.\]
\end{lemma}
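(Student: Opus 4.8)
The plan is to establish the six claims in order, since (a) and (b) are the transformation laws that drive the rest. For (a) I would specialize the chain rule~\eqref{eq:ChainRule} with $y=x^{-1}$ and use $J(e,z)=1$: this gives $1=J(e,z)=J(x^{-1},x\cdot z)J(x,z)$, which is exactly (a) and already appears as~\eqref{eq:resipro}. For (b) the key point is that $U_xf(z)=J(x,z)f(x\cdot z)$ is a \emph{unitary} operator on $A^2(D)$: it is an isometry by the change of variables~\eqref{eq:TransForm} applied to $|f|^2$, and $U_{x^{-1}}$ is its inverse. Writing the reproducing vector $k_z(\cdot)=K(\cdot,z)$, so that $f(z)=\langle f,k_z\rangle$, I would compute $\langle U_xf,k_z\rangle=(U_xf)(z)=J(x,z)\langle f,k_{x\cdot z}\rangle$ and compare with $\langle f,U_x^{*}k_z\rangle$ to read off $U_x^{-1}k_z=\overline{J(x,z)}\,k_{x\cdot z}$; applying $U_x$, evaluating the two holomorphic functions at $w$, and interchanging $z$ and $w$ produces (b), i.e.~\eqref{eq:TransRule}.

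Parts (c) and (d) are then formal. Putting $z=w=0$ in (b) gives $|J(x,0)|^2K(x\cdot 0,x\cdot 0)=K(0,0)$, and the normalization $\int_D dz=1$ forces $K(0,0)=1$ (the constant function is the unique basis vector of $A^2(D)$ not vanishing at the origin), so $h(x)=K(x\cdot 0,x\cdot 0)^{-1/g}=|J(x,0)|^{2/g}$, which is (c). In (d) the equality $h(z)^{-g}=K(z,z)$ is immediate from the definition of $h$, so only invariance of $K(z,z)\,dz$ must be shown; setting $w=z$ in (b) gives $K(z,z)=|J(x,z)|^2K(x\cdot z,x\cdot z)$, and inserting $F(z)=f(z)K(z,z)$ into~\eqref{eq:TransForm} yields $\int_D f(x\cdot z)K(z,z)\,dz=\int_D f(z)K(z,z)\,dz$. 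Uniqueness of the invariant measure on $D=G/K$ up to a positive scalar completes (d).

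For (e), right $K$-invariance is immediate from $h(x)=h(x\cdot 0,x\cdot 0)$ together with $k\cdot 0=0$. Left $K$-invariance reduces, via the chain rule and (c), to $|J(k,\cdot)|\equiv 1$ for $k\in K$, which holds because in the Harish--Chandra (circled) realization $K$ acts by unitary linear transformations, so $J(k,\cdot)=\det k$ is a constant of modulus one. For $h(x^{-1})=h(x)$ I would use a polar decomposition $G=KAK$: writing $x=k_1ak_2$, the biinvariance just proved gives $h(x)=h(a)$ and $h(x^{-1})=h(a^{-1})$, and because the restricted root system of a bounded symmetric domain is of type $C_r$ or $BC_r$ --- whose Weyl groups both contain $-1$ --- there is $w\in K$ normalizing $A$ with $waw^{-1}=a^{-1}$, so $h(a^{-1})=h(waw^{-1})=h(a)$.

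Finally, (f) carries the real content. Using (c) and the chain rule, $h(xu)=|J(x,u\cdot 0)|^{2/g}h(u)$, and $h(u)$ is bounded above and below by positive constants for $u$ in the compact set $U$; so, with $V=U\cdot 0$ a compact subset of $D$, the claim reduces to the uniform comparability $|J(x,w)|\asymp|J(x,0)|$ for $w\in V$ and \emph{all} $x\in G$. Taking logarithms and using $2\log|J(x,w)|=\log K(w,w)-\log K(x\cdot w,x\cdot w)$, this reduces to bounding the oscillation of $z\mapsto\log K(z,z)$ over the set $x\cdot V$. The decisive observation is that $G$ acts by isometries of the Bergman metric, so $x\cdot V$ has Bergman diameter independent of $x$; the oscillation bound then follows once one knows that $\log K(z,z)$ is Lipschitz for the Bergman distance (equivalently, has Bergman-bounded gradient), which can be checked from the explicit form $K(z,z)=c\,\Delta(z,z)^{-g}$ in the Harish--Chandra realization. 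I expect this uniform oscillation estimate, rather than any of the algebraic identities (a)--(e), to be the main obstacle, because it is the one step requiring genuine analytic control of the kernel up to the boundary rather than the cocycle relations alone.
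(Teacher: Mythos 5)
Parts (a)--(d) of your proposal are correct and essentially coincide with the paper's own arguments: (a) is the chain rule specialization, your unitary-operator/reproducing-vector computation for (b) is the paper's integral computation rewritten in adjoint form, and (c), (d) then follow formally (the paper's (c) also implicitly uses $K(0,0)=1$, which you justify correctly via the normalized measure on the circled realization). In (e), your proof of $h(x^{-1})=h(x)$ via the $KAK$ decomposition and the fact that $-1$ lies in the Weyl group of a root system of type $C_r$ or $BC_r$ is a genuinely different route: the paper instead derives $J(x,x^{-1}\cdot 0)=h(x)^{-1}=J(x^{-1},x\cdot 0)$ directly from the transformation rule and $K(z,0)=K(0,w)=1$. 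Both are valid; yours invokes structure theory where the paper stays entirely within the kernel identities.

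Part (f) is where your proposal has a genuine gap. Your reduction $h(xu)=|J(x,u\cdot 0)|^{2/g}h(u)$ places the noncompact variable $x$ in the group slot of $J$, so you are left needing $|J(x,w)|\asymp|J(x,0)|$ uniformly over all $x\in G$ and $w$ in a compact subset of $D$; this is not something compactness can deliver, and it is in fact equivalent to (f) itself. You then propose to obtain it from the claim that $z\mapsto\log K(z,z)$ is Lipschitz for the Bergman distance, but that claim is again essentially equivalent to (f): bounded oscillation of $\log K(z,z)$ on the sets $x\cdot V$ (which are exactly the sets $\{xu\cdot 0 : u\in U\}$, i.e. Bergman balls of fixed radius about $x\cdot 0$) is a restatement of $h(xu)\asymp h(x)$. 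So the argument defers the real work to an unproven assertion of the same strength, and ``can be checked from the explicit form'' is precisely the nontrivial step (near a boundary point where only some of the singular coordinates tend to $1$, the verification is not routine). The paper avoids this circle with one trick you missed: apply (e) \emph{first}, writing $h(xu)=h(u^{-1}x^{-1})$, and then use the chain rule to get $h(xu)=|J(u^{-1},x^{-1}\cdot 0)|^{2/g}\,h(x^{-1})$. Now the group slot carries $u^{-1}$, ranging over the compact set $U^{-1}$, while the point $x^{-1}\cdot 0$ merely ranges over $D\subset\bar D$. Since the $G$-action extends smoothly to the compact closure $\bar D$ and $J$ never vanishes on $G\times\bar D$, compactness of $U^{-1}\times\bar D$ gives uniform bounds $0<C_1\le|J(u^{-1},z)|\le C_2$, and (f) follows at once from $h(x^{-1})=h(x)$. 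To salvage your route you would have to prove the Bergman--Lipschitz property of $\log K(z,z)$ independently; the inversion trick together with the extension of the action to $\bar D$ replaces that analysis entirely.
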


\begin{proof}
a) We have $J(e,z)=1$. Hence the cocylce relation gives
\[1=J(x^{-1}x,z)=J(x^{-1},x\cdot z)J(x,z).\]

\noindent
b) Let $f\in A^2(D)$. Then $z\mapsto J(x,z)f(x\cdot z)=J(x^{-1},x\cdot z)f(x\cdot z)$ is again in
$A^2(D)$ as $z\mapsto J(x,z)$ is holomorphic and
\[\int_D |J(x,z)f(x\cdot z)|^2dz= \int_D |f(x\cdot z)|^2|J(x,z)|^2dz= \|f\|_{A^2}^2<\infty \]
The reproducing property of $K(z,w)$ gives:
\begin{align*}
 J(x,z)f(x\cdot z)&=\int_D f(x\cdot w)J(x,w)K(z,w)dw \\
 &=\int_D f(x\cdot w) \overline{J(x^{-1},x\cdot w)} K(z,x^{-1}x\cdot w) |J(x,w)|^2dw\\
 &=\int_ D f(w)\overline{J(x^{-1},w)}K(z,x^{-1}\cdot w)dw 
 \end{align*}
 But this can also be calculated as
 \[J(x,z)f(x\cdot z)=\int_D f(w)J(x,z)K(x\cdot z,w)dw .\]
 Replacing $w$ by $x\cdot w$ and using that $J(x^{-1},x\cdot w)= J(x,w)^{-1}$ implies the claim. 
 \smallskip
 
 \noindent
 c) By (b) and by the definition of $h$ as $h(z,z)=K(z,z)^{-1/g}$ we get 
 \[h(x\cdot 0,x\cdot 0)=K(x\cdot 0,x\cdot 0)^{-1/g}=|J(x,0)|^{2/g}.\]
  \smallskip
 
 \noindent
d) This follows easily from (b).
  \smallskip
 
\noindent
e) This is well know, but let us give a proof. That $h$ is $K$-biinvariant follows from (c) and (\ref{eq:TransRule}) as $J(k ,z )=1$ for all $k\in K$ and $z\in D$.  As
$K(z,0)=K(0,w)=1$ for all $z,w\in D$ we get from the transformation rule (\ref{eq:TransRule})
that $J(x,0)J(x,x^{-1}\cdot 0) =1= J(x^{-1},0)h(x^{-1},x\cdot 0)$. Together with a) this implies that $J (x,x^{-1}\cdot 0)=h(x)^{-1}=J(x^{-1},x\cdot 0)$ and
the claim follows.
\smallskip
 
 \noindent
f) The function $J(g,z)$ is the complex  Jacobian of the $G$ action at the point $z\in D$. The $G$-action extends to a smooth action  on
$\bar D$, 
the closure of $D$,  which is compact. Thus $J(g,z)$ is well defined and smooth on $G\times \bar D$. As $U^{-1} \times \bar D $ is compact
and $J(g,z)$ is never zero on $G\times \bar D$, it follows that there exist constants
$0<C_1<C_2$ such that $C_1\le |J(u^{-1},z)|\le C_2$ for $u\in U$ and $z\in D$. We also have
\[h(xu)=h(u^{-1}x^{-1}) = |J(u^{-1}x^{-1},0)|^{2/g}=|J(u^{-1},x^{-1}\cdot 0)|^{2/g}h(x^{-1}) \]
The claim now follows as $h(x^{-1}) = h(x)$.
  \end{proof}

\begin{remark} The statement f) also follows from \cite[Theorem 3.8]{Faraut1990} and the following lemma.
\end{remark}

\begin{lemma}\label{le:hxhy}  Let $z=x\cdot o, w=y\cdot o\in D$. Then
\[h(y^{-1}x)=|J(y^{-1}x,o)|^{2/g}=\frac{h(z)h(w)}{|h(z,w)|^2}\, .\]
\end{lemma}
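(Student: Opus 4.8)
The first equality is nothing but Lemma~\ref{lem:2.1}(c) applied to the group element $y^{-1}x$, so the whole content lies in the second equality. The plan is to reduce it to a bare identity about the Bergman kernel $K$. Inserting the definitions $h(z)=K(z,z)^{-1/g}$, $h(w)=K(w,w)^{-1/g}$ and $h(z,w)=K(z,w)^{-1/g}$, and using $\overline{K(z,w)}=K(w,z)$ to write $|h(z,w)|^2 = \bigl(K(z,w)K(w,z)\bigr)^{-1/g}$, the asserted formula becomes
\[
\frac{h(z)h(w)}{|h(z,w)|^2}=\left(\frac{K(z,w)\,K(w,z)}{K(z,z)\,K(w,w)}\right)^{1/g}.
\]
Since $K(z,z),K(w,w)>0$ and $K(z,w)K(w,z)=|K(z,w)|^2>0$, the argument of the $1/g$ power is a positive real, so the branch ambiguity in $K^{-1/g}$ is harmless and it suffices to prove the purely multiplicative identity
\[
|J(y^{-1}x,o)|^{2}=\frac{K(z,w)\,K(w,z)}{K(z,z)\,K(w,w)}.
\]

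Next I would evaluate each kernel value by means of the transformation rule Lemma~\ref{lem:2.1}(b) together with the normalization $K(\cdot,o)=K(o,\cdot)=1$ recorded in the proof of part (e). Applying (b) with the element $y^{-1}$ at the points $x\cdot o$ and $y\cdot o$ gives
\[
K(z,w)=J(y^{-1},x\cdot o)\,\overline{J(y^{-1},y\cdot o)}\,K(y^{-1}x\cdot o,o)=J(y^{-1},x\cdot o)\,\overline{J(y^{-1},y\cdot o)},
\]
and conjugating yields $K(w,z)=\overline{J(y^{-1},x\cdot o)}\,J(y^{-1},y\cdot o)$. Applying (b) with $x^{-1}$ on the diagonal and using $K(o,o)=1$ together with Lemma~\ref{lem:2.1}(a) gives $K(z,z)=|J(x^{-1},x\cdot o)|^2=|J(x,o)|^{-2}$, and similarly $K(w,w)=|J(y,o)|^{-2}$.

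Finally I would substitute. The numerator collapses to $|J(y^{-1},x\cdot o)|^2\,|J(y^{-1},y\cdot o)|^2$, and since $J(y^{-1},y\cdot o)=J(y,o)^{-1}$ by part (a) this is $|J(y^{-1},x\cdot o)|^2\,|J(y,o)|^{-2}$; dividing by $K(z,z)K(w,w)=|J(x,o)|^{-2}|J(y,o)|^{-2}$ leaves $|J(y^{-1},x\cdot o)|^2\,|J(x,o)|^2$. The cocycle relation \eqref{eq:ChainRule} gives $J(y^{-1}x,o)=J(y^{-1},x\cdot o)\,J(x,o)$, so the right-hand side is exactly $|J(y^{-1}x,o)|^2$, which closes the argument. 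I do not expect a genuine obstacle here: the calculation is pure bookkeeping, and the only points demanding care are keeping the holomorphic and antiholomorphic slots (and their conjugates) straight when invoking (b), and noting that passage to moduli removes all dependence on the branch of $K^{-1/g}$.
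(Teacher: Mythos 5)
Your proof is correct and follows essentially the same route as the paper's: both apply the transformation rule of Lemma~\ref{lem:2.1}(b) with the group element $y^{-1}$, exploit the normalization $K(\cdot,o)=K(o,\cdot)=1$ at the origin, use part (a)/(c) for the diagonal values, and reassemble via the cocycle relation \eqref{eq:ChainRule}. The only (cosmetic) difference is that you carry out the bookkeeping at the level of $K$ and pass to the positive ratio before taking the $1/g$ power, whereas the paper works directly with $h=K^{-1/g}$ and takes moduli to dispose of the same branch ambiguity.
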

\begin{proof} We first note that $h(u,0)=h(0,v)=1$ for all $u,v\in D$. Hence by Lemma
\ref{lem:2.1} part b:
\begin{align*}
1&=h(y^{-1}x\cdot o, y^{-1}y\cdot o)\\
&=J(y^{-1},x\cdot o)^{1/g}\overline{J(y^{-1},y\cdot o)}^{1/g}h(z,w)\\
&=|J(y^{-1},x\cdot o)|^{1/g}|J(y^{-1},y\cdot o)|^{1/g}|h(z,w)|\, .
\end{align*}

By part (c) in the above lemma we have
\[|J(y^{-1},x\cdot o)|^{2 / g} =|J(y^{-1}x,o)| |J(x,o)|^{-1}=h(y^{-1}x)/h(x)\, .\]
Then take $x=y$ to get
\[|J(y^{-1},y\cdot o)|^{2/g}=1/h(y)\, .\]
This proves the statement.
 \end{proof}

Next we present generalized Forelli-Rudin
estimates for the Bergman kernel due to
Faraut and Koranyi (see Theorem 4.1, p. 80, in \cite{Faraut1990}). Denote by $\partial D$ the boundary of the bounded
domain $D$.
\begin{theorem}
  \label{thm:FK}
For $b\in \R$ and $c>g-1$ define 
\[J_{b,c}(z)=\int_D \frac{h(w,w)^{c-g}}{|h(z,w)|^{b+c}}\, dw\, ,\quad z\in D\, .\]
Then the following holds:
\begin{enumerate}
\item $J_{b,c}(z)$ is bounded on $D$ if and only if $b<-(r-1)a/2$.
\item If $b>(r-1)a/2$ then
  \[J_{b,c}(z)\sim h(z,z)^{-b}, \quad \text{as } z\to \partial D\, .\]
\end{enumerate}
\end{theorem}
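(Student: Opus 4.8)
The plan is to use the $K$-invariance of $J_{b,c}$ to reduce to a diagonal point, and then to apply the spectral (polar-coordinate) integration formula on $D$ so that the problem becomes an explicit integral whose convergence and boundary asymptotics can be read off. First I would check that $z\mapsto J_{b,c}(z)$ is $K$-invariant. Since $K$ acts by unitary linear maps in the bounded realization we have $|J(k,w)|=1$, and the transformation rule (\ref{eq:TransRule}) together with Lemma~\ref{le:hxhy} gives $h(k\cdot w)=h(w)$ and $|h(k\cdot z,k\cdot w)|=|h(z,w)|$; substituting $w\mapsto k\cdot w$ (the Euclidean measure $dw$ is $K$-invariant) then yields $J_{b,c}(k\cdot z)=J_{b,c}(z)$. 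By the spectral decomposition of the underlying Jordan triple, every $K$-orbit in $D$ meets the torus $\{\sum_j s_jc_j : 0\le s_r\le\cdots\le s_1<1\}$ attached to a fixed Jordan frame $(c_j)_{j=1}^r$, on which $h(z,z)=\prod_j(1-s_j^2)$. Hence it suffices to estimate $J_{b,c}$ on this torus, and the limit $z\to\partial D$ becomes $s_1\to1$.

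Next I would insert the polar-coordinate integration formula for $D$, which expresses $\int_D f\,dw$ as a constant times $\int_K\int_{0\le t_r\le\cdots\le t_1<1} f(k\cdot\sum_j t_jc_j)\,\rho(t)\prod_{i<j}(t_i^2-t_j^2)^a\,dt\,dk$, where $\rho(t)$ collects the remaining powers of the $t_j$ coming from the Lebesgue density and $h(w,w)=\prod_j(1-t_j^2)$. This turns $J_{b,c}(\sum_j s_jc_j)$ into
\[
\int_{0\le t_r\le\cdots\le t_1<1}\Bigl(\int_K\bigl|h\bigl(\sum_j s_jc_j,\, k\cdot\sum_j t_jc_j\bigr)\bigr|^{-(b+c)}\,dk\Bigr)\,\prod_j(1-t_j^2)^{c-g}\,\rho(t)\,\prod_{i<j}(t_i^2-t_j^2)^a\,dt .
\]
The boundedness assertion (part 1) is then the statement that this integral converges uniformly in $s$, while the asymptotic assertion (part 2) amounts to isolating the dominant boundary contribution as $s_1\to1$. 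Throughout, the hypothesis $c>g-1$ is what guarantees integrability of the weight $\prod_j(1-t_j^2)^{c-g}$ near the boundary, consistent with the integrability criterion $\gamma>g-1$ recalled above.

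I expect the main obstacle to be the inner average $\int_K|h(\sum_j s_jc_j,\, k\cdot\sum_j t_jc_j)|^{-(b+c)}\,dk$: in contrast to the rank-one case, $h(z,w)$ does not factor through the singular values once $k\ne e$, so this $K$-average must be controlled by other means — for instance through the expansion of $h(z,w)^{-s}$ into the spherical (conical) polynomials of the associated symmetric cone, whose coefficients are ratios of Pochhammer symbols, or through Harish-Chandra $c$-function estimates for $G/K$. The thresholds $b=\pm(r-1)\frac{a}{2}$ are exactly what emerge from balancing the Vandermonde-type confluence factor $\prod_{i<j}(t_i^2-t_j^2)^a$ against the diagonal singularity of $|h(z,w)|^{-(b+c)}$, and carrying out this balance carefully is the crux of the proof. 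Once the inner average is under control, part 1 follows by testing uniform convergence as $s_1\to1$ (which holds precisely when $b<-(r-1)\frac{a}{2}$), and part 2 follows by extracting the leading boundary term and matching it against $\prod_j(1-s_j^2)^{-b}=h(z,z)^{-b}$, the homogeneity of the integrand pinning down the rate.
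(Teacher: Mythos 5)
First, a point of orientation: the paper itself contains no proof of Theorem~\ref{thm:FK}. It is quoted verbatim from Faraut and Kor\'anyi (Theorem 4.1, p.~80, in \cite{Faraut1990}), so the comparison to make is with their argument. That argument does not run through polar coordinates and a $K$-average at all. Instead, Faraut--Kor\'anyi expand the kernel directly,
\[
h(z,w)^{-s}\;=\;\sum_{\mathbf{m}}\,(s)_{\mathbf{m}}\,K^{\mathbf{m}}(z,w),
\]
where the sum runs over partitions $\mathbf{m}=(m_1\geq\cdots\geq m_r\geq 0)$, $(s)_{\mathbf{m}}$ is the generalized Pochhammer symbol and $K^{\mathbf{m}}$ is the reproducing kernel of the $K$-irreducible polynomial space $P^{\mathbf{m}}$ in the Peter--Weyl decomposition. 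Writing $|h(z,w)|^{-(b+c)}$ as $h(z,w)^{-(b+c)/2}\overline{h(z,w)^{-(b+c)/2}}$, expanding both factors, and using the orthogonality of the mutually inequivalent spaces $P^{\mathbf{m}}$ in $A^2_c$, the double series collapses to a single series in $K^{\mathbf{m}}(z,z)$ whose coefficients are explicit ratios of Pochhammer symbols; both parts of the theorem then follow by comparing those coefficients with the coefficients $(b)_{\mathbf{m}}$ in the analogous expansion of $h(z,z)^{-b}$. The thresholds $\pm(r-1)a/2$ fall out of precisely this coefficient comparison, since $(s)_{\mathbf{m}}=\prod_j\bigl(s-(j-1)\tfrac{a}{2}\bigr)_{m_j}$.

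Measured against this, your proposal has a genuine gap: it stops exactly where the theorem begins. The reductions you do carry out ($K$-invariance of $J_{b,c}$, the polar-coordinate formula, the role of $c>g-1$ for integrability of the weight) are correct but routine; the entire content of the theorem --- the ``if and only if'' in part (1), the two-sided asymptotics in part (2), and the appearance of the exact cut-offs $\pm(r-1)a/2$ --- is delegated to controlling the inner average $\int_K |h(z,k\cdot w)|^{-(b+c)}\,dk$, which you explicitly leave open as ``the crux of the proof.'' That average is a hypergeometric-type special function of the singular values; no elementary balancing of the Vandermonde factor $\prod_{i<j}(t_i^2-t_j^2)^a$ against the diagonal singularity is known to yield the sharp thresholds, which is exactly why Faraut--Kor\'anyi bypass the $K$-integral altogether via the expansion and orthogonality described above. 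Note also that, as written, your plan only addresses one direction of part (1) (sufficiency of $b<-(r-1)a/2$ for boundedness) and only the upper bound in part (2); the unboundedness when $b\geq -(r-1)a/2$ and the matching lower bound require the precise coefficient asymptotics that only a quantitative tool like the expansion provides. So what you have is a plausible opening move, and to complete it you would in effect have to reproduce the Faraut--Kor\'anyi argument inside your ``inner average'' step --- at which point the polar-coordinate scaffolding becomes unnecessary.
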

These estimates provide the following
result about boundedness for an integral operator with kernel given
by the modulus of the Bergman kernel.

\begin{theorem}\label{thm:bekollekagou}
For $p>1$ the operator $T$ defined by
$$
Tf(z) = \int_D f(w) |h(z,w)|^{-\gamma} h(w)^{\gamma-g}\,dw
$$  
is bounded $L^p_\alpha(D)\to L^p_\alpha(D)$ if $\gamma>g-1+(r-1)a/2$ and
$$
g-1 - (r-1)a/2 +p(r-1)a/2 < \alpha < g-1-(r-1)a/2 + p(\gamma+1-g).
$$
\end{theorem}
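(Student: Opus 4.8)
The plan is to prove boundedness of $T$ by the Schur test with a power weight, using the Forelli--Rudin estimates of Theorem~\ref{thm:FK} to evaluate the two resulting integrals. First I would recast $T$ as an integral operator against the measure $d\mu_\alpha$ rather than $dz$. Since $dz = c_\alpha^{-1} h(w,w)^{g-\alpha}\,d\mu_\alpha(w)$, the operator becomes $Tf(z) = c_\alpha^{-1}\int_D f(w)\,|h(z,w)|^{-\gamma} h(w,w)^{\gamma-\alpha}\,d\mu_\alpha(w)$, so that on $L^p(\mu_\alpha)$ the relevant nonnegative kernel is $k(z,w) = c_\alpha^{-1}|h(z,w)|^{-\gamma}h(w,w)^{\gamma-\alpha}$. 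Boundedness of $T$ on $L^p_\alpha(D)$ is then exactly boundedness of the positive integral operator with kernel $k$ on $L^p(\mu_\alpha)$.

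Next I would apply the Schur test with the test function $\phi(z) = h(z,z)^{s}$ for a parameter $s$ to be chosen, writing $p' = p/(p-1)$. One must verify $\int_D k(z,w)\phi(w)^{p'}\,d\mu_\alpha(w) \le C_1\,\phi(z)^{p'}$ and $\int_D k(z,w)\phi(z)^{p}\,d\mu_\alpha(z) \le C_2\,\phi(w)^{p}$. Substituting $d\mu_\alpha = c_\alpha h(\cdot,\cdot)^{\alpha-g}\,dz$, the first integral collapses to $\int_D |h(z,w)|^{-\gamma} h(w,w)^{\gamma-g+sp'}\,dw = J_{-sp',\,\gamma+sp'}(z)$, while, using $|h(z,w)| = |h(w,z)|$, the second collapses to $h(w,w)^{\gamma-\alpha}\int_D |h(w,z)|^{-\gamma} h(z,z)^{sp+\alpha-g}\,dz = h(w,w)^{\gamma-\alpha}J_{\gamma-sp-\alpha,\,sp+\alpha}(w)$, in the notation of Theorem~\ref{thm:FK}. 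Applying part (2) of that theorem (which needs the $b$-exponent to exceed $(r-1)a/2$ and the $c$-exponent to exceed $g-1$) gives $J_{-sp',\gamma+sp'}(z)\sim h(z,z)^{sp'}$ and $J_{\gamma-sp-\alpha,sp+\alpha}(w)\sim h(w,w)^{-(\gamma-sp-\alpha)}$; the latter, multiplied by the prefactor $h(w,w)^{\gamma-\alpha}$, produces exactly $h(w,w)^{sp}=\phi(w)^{p}$. Thus both Schur conditions hold provided
\begin{equation*}
-sp' > (r-1)\frac{a}{2},\quad \gamma+sp' > g-1,\quad \gamma-sp-\alpha > (r-1)\frac{a}{2},\quad sp+\alpha > g-1 .
\end{equation*}

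The remaining step is bookkeeping. Writing $u = sp$, so that $sp' = u/(p-1)$, the four inequalities read $\max\{(p-1)(g-1-\gamma),\,g-1-\alpha\} < u < \min\{-(p-1)(r-1)\frac{a}{2},\,\gamma-\alpha-(r-1)\frac{a}{2}\}$. An admissible $u$ exists precisely when each lower bound lies strictly below each upper bound; the four resulting inequalities simplify to $\gamma > g-1+(r-1)\frac{a}{2}$ together with $g-1+(p-1)(r-1)\frac{a}{2} < \alpha < g-1-(r-1)\frac{a}{2}+p(\gamma+1-g)$, which are exactly the stated hypotheses (note $g-1+(p-1)(r-1)\frac{a}{2} = g-1-(r-1)\frac{a}{2}+p(r-1)\frac{a}{2}$). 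Choosing any such $u$, hence $s$, yields finite Schur constants and the bound $\|T\|\le C_1^{1/p'}C_2^{1/p}$.

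The only genuinely delicate point is that Theorem~\ref{thm:FK}(2) supplies the comparison $J_{b,c}(z)\sim h(z,z)^{-b}$ only \emph{as} $z\to\partial D$, whereas the Schur test needs a global upper bound on all of $D$. I would close this gap by noting that $J_{b,c}$ is continuous and strictly positive on the interior once $c>g-1$, as is $h(z,z)^{-b}$; hence their ratio is bounded on any compact subset, and combining with the boundary asymptotics gives $J_{b,c}(z)\le C\,h(z,z)^{-b}$ throughout $D$. Everything else is routine, so I expect the main effort to lie in carefully tracking the exponents $b=-sp'$, $c=\gamma+sp'$ and $b'=\gamma-sp-\alpha$, $c'=sp+\alpha$ through the two applications of the Forelli--Rudin estimate and in verifying that the interval for $s$ is nonempty exactly under the given conditions.
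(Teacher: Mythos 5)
Your proposal is correct and is essentially the paper's own proof: what the paper calls a ``standard trick based on Schur's lemma'' is exactly your Schur test with test function $h(z,z)^s$ (the paper's auxiliary H\"older weight $h(w)^{\pm\epsilon}$ corresponds to $\epsilon=-s$), and it likewise applies Theorem~\ref{thm:FK} twice with the same exponents, arriving at the same four parameter constraints and the same verification that they are simultaneously satisfiable precisely on the stated range of $\alpha$ and $\gamma$. Your explicit step upgrading the boundary asymptotics $J_{b,c}(z)\sim h(z,z)^{-b}$ to a global bound $J_{b,c}(z)\le C\,h(z,z)^{-b}$ on all of $D$ is a point the paper uses silently, so making it explicit is a welcome refinement rather than a departure.
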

A more general result was proved by B\'ekoll\'e and Temgoua Kagou
(see Theorem II.7 in \cite{Bekolle1995}).

\begin{proof}
  This involves a standard trick based on Schur's lemma.
  Notice that for $\epsilon >0$ we have, by H\"older's inequality with
  $1/p+1/q=1$,
  \begin{align*}
    \| T_\gamma f\|_{L^p_\alpha(D)}^p
    &\leq \int_D \left(
      \int_D |f(w)| h(w)^\epsilon|h(z,w)|^{-\gamma} h(w)^{-\epsilon}
      h(w)^{\gamma-g}\,dw 
      \right)^p h(z)^{\alpha-g}\,dz \\
    &\leq
      \int_D \left(
      \int_D |f(w)|^p h(w)^{p\epsilon}|h(z,w)|^{-\gamma}h(w)^{\gamma-g}\,dw 
      \right) \\
    &\quad \times
      \left( \int_D h(w)^{-q\epsilon}|h(z,w)|^{-\gamma}h(w)^{\gamma-g}\,dw 
      \right)^{p/q}
      h(z)^{\alpha-g}\,dz \\
    \intertext{which by Theorem~\ref{thm:FK} is, up to a constant, less than}
    &\leq
      \int_D \left(
      \int_D |f(w)|^p h(w)^{p\epsilon}|h(z,w)|^{-\gamma}h(w)^{\gamma-g}\,dw 
      \right) \left( h(z)^{-q\epsilon} \right)^{p/q}
      h(z)^{\alpha-g}\,dz,
      \intertext{if $q\epsilon>(r-1)a/2$. By Tonelli's theorem this equals}
    &= \int_D |f(w)|^p h(w)^{p\epsilon} h(w)^{\gamma-g} \int_D 
      |h(z,w)|^{-\gamma} h(z)^{-p\epsilon} h(z)^{\alpha-g}\,dz \,dw ,
      \intertext{which by Theorem~\ref{thm:FK} is less than}
    &\leq \int_D |f(w)|^p h(w)^{p\epsilon} h(w)^{\gamma-g}
      h(w)^{-\gamma+\alpha-p\epsilon}\,dw
      \intertext{if $\gamma+p\epsilon-\alpha>(r-1)a/2.$ Finally this
      equals}
    &= \| f\|_{L^p_\alpha(D)}^p.
  \end{align*}
  The restrictions for the parameters $\alpha$ and $\gamma$ required by
  Theorem~\ref{thm:FK} can be rewritten to
  $$
  g-1 - (r-1)a/2 +p(r-1)a/2 < \alpha < g-1-(r-1)a/2 + p(\gamma+1-g).
  $$
  
\end{proof}

The proof of the wavelet characterization in Theorem~\ref{thm:waveletcharbergman} will be carried out using ideas from
coorbit theory for projective representations as presented in \cite{Christensen2019}.
For this purpose we will rephrase Theorem~\ref{thm:bekollekagou}
in the form of boundedness of convolution operators on $G$.
 
  For the remainder of this paper we normalize the Haar
  measure on $G$
  such that if $f$ is $K$-right-invariant and $\widetilde{f}(z) = f(x)$
  when $z=xK$, then
  $$
  \int_G f(x)\,dx = \int_{D} \widetilde{f}(z) h(z)^{-g}\,dz.
  $$

For functions $F,G$ on the group $G$ we define convolution in
the usual way:
$$
F*G(x) = \int F(y)G(y^{-1}x)\,dy.
$$

\begin{theorem}
  \label{thm:convoperator}
  The operator $C_\gamma:L^p_{\alpha-\gamma p/2}(G)\to L^p_{\alpha-\gamma p/2}(G)$
  defined by
  $$
  C_\gamma F(x)  =  \int_G F(y) h(y^{-1}x)^{\gamma/2} \,dy
 = F*h^{\gamma/2}(x)
  $$
  is bounded if  $
  g-1 +(p-1)(r-1)a/2 < \alpha < g-1 + p(\gamma+1-g)- (r-1)a/2.
  $
\end{theorem}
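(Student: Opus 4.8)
The plan is to transfer the boundedness of the group convolution operator $C_\gamma$ to the boundedness of the integral operator $T$ from Theorem~\ref{thm:bekollekagou}, exploiting that the kernel $h(y^{-1}x)^{\gamma/2}$ is built from the $K$-biinvariant function $h$ of Lemma~\ref{lem:2.1}(e). First I would observe that since $h(k_1 g k_2)=h(g)$ for $k_1,k_2\in K$, the kernel satisfies $h((yk)^{-1}x)^{\gamma/2}=h(y^{-1}x)^{\gamma/2}=h(y^{-1}(xk))^{\gamma/2}$, so $C_\gamma F$ is always right-$K$-invariant and depends on $F$ only through its right-$K$-average $\bar F(y)=\int_K F(yk)\,dk$. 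Because the weight $h(x)^{\alpha-\gamma p/2}$ is itself $K$-biinvariant, Jensen's inequality shows that passing from $F$ to $\bar F$ does not increase the $L^p_{\alpha-\gamma p/2}(G)$-norm. Hence it suffices to prove the estimate for right-$K$-invariant $F$, which I identify with a function $f$ on $D$ via $f(z)=F(x_z)$, where $x_z\cdot o=z$.

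Next I would rewrite the convolution as an integral over $D$. Applying the Haar measure normalization $\int_G \Phi(y)\,dy=\int_D\widetilde\Phi(w)h(w)^{-g}\,dw$ to the (right-$K$-invariant) integrand $\Phi(y)=F(y)h(y^{-1}x)^{\gamma/2}$, and using Lemma~\ref{le:hxhy} in the form $h(y^{-1}x)=h(z)h(w)/|h(z,w)|^2$ with $z=x\cdot o$ and $w=y\cdot o$, gives
\[
C_\gamma F(x)=h(z)^{\gamma/2}\int_D f(w)\,|h(z,w)|^{-\gamma}h(w)^{\gamma/2-g}\,dw.
\]
The key structural observation is that, after the substitution $f=h^{\gamma/2}g$, the right-hand side becomes $h(z)^{\gamma/2}\,(Tg)(z)$; that is, as operators on functions on $D$ the convolution operator is conjugate to $T$ by multiplication by $h^{\gamma/2}$, i.e.\ $C_\gamma = h^{\gamma/2}\,T\,h^{-\gamma/2}$. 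This is the heart of the argument and the reason the two statements carry the same range of parameters.

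It then remains to track the weights. For right-$K$-invariant functions the normalization gives $\|F\|_{L^p_\beta(G)}^p\sim\int_D|f(z)|^p h(z)^{\beta-g}\,dz\sim\|f\|_{L^p_\beta(D)}^p$. Taking $\beta=\alpha-\gamma p/2$ and inserting the conjugation identity, the factor $h(z)^{\gamma/2}$ raised to the $p$th power contributes $h(z)^{p\gamma/2}$, which combines with $h(z)^{\beta-g}$ to yield exactly $h(z)^{\alpha-g}$; thus $\|C_\gamma F\|_{L^p_{\alpha-\gamma p/2}(G)}\sim\|Tg\|_{L^p_\alpha(D)}$, while on the input side $\|F\|_{L^p_{\alpha-\gamma p/2}(G)}\sim\|g\|_{L^p_\alpha(D)}$. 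Theorem~\ref{thm:bekollekagou} then furnishes the bound, and its hypotheses, namely $\gamma>g-1+(r-1)a/2$ together with $g-1-(r-1)a/2+p(r-1)a/2<\alpha<g-1-(r-1)a/2+p(\gamma+1-g)$, coincide literally with the stated range $g-1+(p-1)(r-1)a/2<\alpha<g-1+p(\gamma+1-g)-(r-1)a/2$. The main obstacle I anticipate is the reduction to $K$-invariant functions in the first step: one must verify carefully that the convolution genuinely factors through the $K$-average and that this averaging is a contraction for the $K$-biinvariant weight, since everything after that is a direct computation once the conjugation $C_\gamma=h^{\gamma/2}\,T\,h^{-\gamma/2}$ and the measure normalization are in hand.
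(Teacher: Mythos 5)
Your proposal is correct and takes essentially the same route as the paper: the paper's auxiliary function $\widetilde{F}(w)=h(w)^{-\gamma/2}\int_K F(yk)\,dk$ is exactly your $g=h^{-\gamma/2}\bar{F}$ viewed on $D$, so your conjugation identity $C_\gamma=h^{\gamma/2}\,T\,h^{-\gamma/2}$ together with the Jensen/H\"older contraction for the right-$K$-average is precisely the paper's argument, merely reorganized. Both proofs then conclude by invoking Theorem~\ref{thm:bekollekagou} and checking that the parameter ranges coincide.
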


\begin{proof}
  We already know that $h(y^{-1}x) = h(x)h(y)/|h(x,y)|^2$, so $C_\gamma$ takes the form
  \begin{align*}
  C_\gamma F(x) 
  &= h(x)^{\gamma/2}  
  \int_G F(y)h(y)^{\gamma/2}|h(x,y)|^{-\gamma} \,dy \\
  &= h(x)^{\gamma/2}    \int_D \widetilde{F}(w)|h(x\cdot o,w)|^{-\gamma} h(w)^{\gamma-g} \,dw
  \end{align*}
  where 
  $$
  \widetilde{F}(w) = \frac{1}{h(w)^{\gamma/2}}\int_K F(yk)\,dk
  $$
  for $w=y\cdot o$.
  By H\"older's inequality
  $$
  \int_K |F(yk)| \,dk \leq \left(\int_K |F(yk)|^p\,dk \right)^{1/p} \left( \int_K 1\,dk\right)^{1/q}
  = \left(\int_K |F(yk)|^p\,dk \right)^{1/p},
  $$
  so by Fubini's Theorem and the right-$K$-invariance of $h$ we get
  \begin{align*}
   \|\widetilde{F}\|_{L^p_\alpha(D)}^p
   &= \int_D |\widetilde{F}(w)|^p h(w)^{\alpha-g} \,dw \\
   &= \int_D \left| \int_K F(yk)\,dk \right|^p h(w)^{\alpha-\gamma p/2-g} \,dw \\
   &= \int_G \left| \int_K F(yk)\,dk \right|^p h(y)^{\alpha-\gamma p/2} \,dy \\
   &\leq \int_G\left( \int_K |F(yk)|^p \,dk\right) h(y)^{\alpha-\gamma p/2} \,dy \\
   &\leq \int_G |F(y)|^p h(y)^{\alpha-\gamma p/2} \,dy.
  \end{align*}

  Since $C_\gamma F$ is right-$K$-invariant its $L^p_{\alpha-\gamma p/2}(G)$-norm can be
  estimated by
  \begin{align*}
    \int_G |C_\gamma F(x)|^p h(x)^{\alpha-\gamma p/2}\,dx
    &= \int_G \left| \int_D \widetilde{F}(w)     |h(x\cdot o,w)|^{-\gamma} h(w)^{\gamma-g} \,dw \right|^p h(x)^\alpha \,dx \\
    &= \int_D \left| \int_D \widetilde{F}(w)     |h(z,w)|^{-\gamma} h(w)^{\gamma-g} \,dw \right|^p h(z)^{\alpha-g} \,dz \\
    &= \int_D \left| T \widetilde{F}(z)  \right|^p h(z)^{\alpha-g} \,dz \\
    &= \| T\widetilde{F}\|_{L^p_\alpha(D)}^p \\
    &\leq C \|\widetilde{F}\|_{L^p_\alpha(D)}^p \\
    &\leq C \|F\|_{L^p_{\alpha-\gamma p/2}(G)}^p
  \end{align*}
  In the second to last estimate we used the boundedness of the operator $T$.
  This shows that $C_\gamma$ is bounded on $L^P_{\alpha-\gamma p/2}(G)$ for the specified ranges of $\alpha$,$\gamma$ and $p$.
  \end{proof}

\section{Proof of the first part of Theorem~\ref{thm:waveletcharbergman}}

We now turn to the proof of Theorem~\ref{thm:waveletcharbergman}.
\begin{lemma}\label{lem:alphabeta}
  If $\alpha>g-1$ then for $\beta > 2\alpha/p+g-1$ we have
  $A^p_\alpha \subseteq A^2_\beta$.
\end{lemma}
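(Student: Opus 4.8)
The statement is a nesting of weighted Bergman spaces: for $\alpha > g-1$ and $\beta > 2\alpha/p + g - 1$ we want $A^p_\alpha \subseteq A^2_\beta$. My plan is to control the $A^2_\beta$-norm of a holomorphic $f$ by its $A^p_\alpha$-norm through a pointwise estimate on $|f(z)|$ followed by an integration against the heavier weight. The key heuristic is that the extra integrability demanded by passing from exponent $p$ to exponent $2 \geq p$ (when $p \leq 2$) or from $2$ to $p$ (when $p \geq 2$) is purchased by the rapidly decaying weight $h(z)^{\beta - g}$, and the inequality $\beta > 2\alpha/p + g - 1$ is exactly the bookkeeping condition that makes the relevant weight integrable.

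First I would establish a pointwise bound of the form $|f(z)|^2 \lesssim h(z)^{-2\alpha/p} \|f\|_{A^p_\alpha}^2$ for $f \in A^p_\alpha$. This is the standard sub-mean-value / reproducing-kernel estimate for weighted Bergman spaces: one writes $f(z)$ via the reproducing kernel $K_\alpha(z,w) = h(z,w)^{-\alpha}$, or more robustly one uses that point evaluation on a small invariant ball around $z$ is bounded, and the weight $h$ is comparable on such a ball by Lemma~\ref{lem:2.1}(f). Concretely, translating by $x_z \in G$ with $x_z \cdot o = z$ and using the quasi-invariance of $\mu_\alpha$ together with the cocycle $j_\alpha$, one reduces the evaluation at $z$ to evaluation at the origin, where the boundedness of point evaluation on $A^p_\alpha$ is immediate; tracking the Jacobian factors produces exactly the power $h(z)^{-2\alpha/p}$ via Lemma~\ref{lem:2.1}(c).

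Next I would write
\[
\|f\|_{A^2_\beta}^2 = c_\beta \int_D |f(z)|^2 h(z)^{\beta - g}\,dz
\lesssim \|f\|_{A^p_\alpha}^2 \int_D h(z)^{-2\alpha/p} h(z)^{\beta - g}\,dz,
\]
so the whole question reduces to the finiteness of $\int_D h(z)^{\beta - 2\alpha/p - g}\,dz$. By the integrability criterion stated in the excerpt (the function $h(z,z)^{\delta - g}$ is integrable precisely when $\delta > g - 1$), this integral converges if and only if $\beta - 2\alpha/p > g - 1$, which is exactly the hypothesis $\beta > 2\alpha/p + g - 1$. This closes the argument.

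The main obstacle, and the place I would spend the most care, is the pointwise estimate $|f(z)|^2 \lesssim h(z)^{-2\alpha/p}\|f\|_{A^p_\alpha}^2$ with the correct power of $h$. For $p = 2$ this is transparent, but for general $p$ one must argue that the Bergman-space point evaluation scales like $h(z)^{-\alpha/p}$ rather than some other power, and getting the exponent right is where the hypothesis on $\alpha$ (namely $\alpha > g-1$, guaranteeing $\mu_\alpha$ is a genuine probability measure and the reproducing kernel exists) is used. One clean route is to invoke the Forelli--Rudin estimates of Theorem~\ref{thm:FK}: represent $f(z)$ by its reproducing integral, apply Hölder with the weight split as in the proof of Theorem~\ref{thm:bekollekagou}, and read off the decay of the resulting kernel integral $J_{b,c}$ near $\partial D$. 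Once the exponent is pinned down, the remaining steps are the routine integrability computation above.
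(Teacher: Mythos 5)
Your proposal is correct and follows essentially the same route as the paper: a pointwise estimate $|f(z)| \le C\, h(z)^{-\alpha/p}\|f\|_{A^p_\alpha}$ followed by the observation that $\int_D h(z)^{\beta-2\alpha/p-g}\,dz$ converges exactly when $\beta - 2\alpha/p > g-1$. The only difference is that the paper simply cites Lemma 2.1 of \cite{Rochberg1982} for the pointwise bound, whereas you sketch its proof; your primary argument (translating $z$ to the origin by $x_z\in G$, tracking the Jacobian factors via Lemma~\ref{lem:2.1}(c), and using boundedness of point evaluation at $o$) is the standard and sound way to establish it.
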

\begin{proof}
  Lemma 2.1 in \cite{Rochberg1982} states that
  $$
  |f(z)| \leq C h(z)^{-\alpha/p} \| f\|_{A^p_\alpha}.
  $$
  Therefore,
  $$
  \| f\|_{A^2_\beta}^2 \leq
  C \| f\|_{A^p_\alpha}^2 \int_D h(z)^{\beta-2\alpha/p-g}\,dz 
  $$
  which is finite if $\beta-2\alpha/p > g-1$.
\end{proof}

\begin{lemma}\label{lem:bergmanindual}
  If $\alpha,\gamma >g-1$ then $A^p_\alpha \subseteq S_\gamma^*$.
\end{lemma}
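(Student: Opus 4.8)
The plan is to verify directly the defining estimate for $S_\gamma^*$: given $f\in A^p_\alpha$ with homogeneous expansion $f=\sum_{k\ge 0}f_k$, I want to produce $N$ and $C$ with $\|f_k\|_{A^2_\gamma}\le C(1+k)^N$. The idea is to pass first to a Hilbert Bergman space $A^2_\beta$ with $\beta$ large, where the homogeneous parts are orthogonal and hence uniformly bounded by $\|f\|_{A^2_\beta}$, and then to compare the $A^2_\gamma$- and $A^2_\beta$-norms on each space of homogeneous polynomials of fixed degree, showing that the comparison constant grows only polynomially in the degree.

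First I would invoke Lemma~\ref{lem:alphabeta}: since $\alpha>g-1$, choosing any $\beta>\max\{2\alpha/p+g-1,\gamma\}$ gives $f\in A^2_\beta$. Because the scalar rotations $z\mapsto e^{i\theta}z$ lie in $K$ and $h(z,z)$ is $K$-invariant, the measure $\mu_\beta$ is rotation invariant; consequently homogeneous polynomials of distinct degrees are orthogonal in $A^2_\beta$, so that $\|f\|_{A^2_\beta}^2=\sum_{k\ge 0}\|f_k\|_{A^2_\beta}^2$ and in particular $\|f_k\|_{A^2_\beta}\le \|f\|_{A^2_\beta}$ for every $k$. The same remark applies with $\gamma$ in place of $\beta$, and the finer $K$-decomposition below refines this orthogonality.

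Next I would use the decomposition of the holomorphic polynomials $P[\C^n]=\bigoplus_{\mathbf m}P_{\mathbf m}$ into irreducible $K$-modules indexed by signatures $\mathbf m=(m_1\ge\cdots\ge m_r\ge 0)$, where $P_{\mathbf m}$ consists of homogeneous polynomials of degree $|\mathbf m|=m_1+\cdots+m_r$ (see \cite{Faraut1990}). On each $P_{\mathbf m}$ the two $K$-invariant inner products $\langle\cdot,\cdot\rangle_{A^2_\gamma}$ and $\langle\cdot,\cdot\rangle_{A^2_\beta}$ are each proportional to the weight-independent Fischer inner product, and the resulting norm computation of Faraut--Kor\'anyi gives the ratio $\|p_{\mathbf m}\|_{A^2_\gamma}^2/\|p_{\mathbf m}\|_{A^2_\beta}^2=(\beta)_{\mathbf m}/(\gamma)_{\mathbf m}$ in terms of the generalized Pochhammer symbols $(\gamma)_{\mathbf m}=\prod_{j=1}^r(\gamma-(j-1)a/2)_{m_j}$. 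Writing $f_k=\sum_{|\mathbf m|=k}p_{\mathbf m}$ with $p_{\mathbf m}\in P_{\mathbf m}$ and using orthogonality of distinct $K$-types, I obtain
$$
\|f_k\|_{A^2_\gamma}^2=\sum_{|\mathbf m|=k}\frac{(\beta)_{\mathbf m}}{(\gamma)_{\mathbf m}}\,\|p_{\mathbf m}\|_{A^2_\beta}^2\le\Big(\max_{|\mathbf m|=k}\frac{(\beta)_{\mathbf m}}{(\gamma)_{\mathbf m}}\Big)\|f_k\|_{A^2_\beta}^2.
$$

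The main obstacle is the last step: controlling $\max_{|\mathbf m|=k}(\beta)_{\mathbf m}/(\gamma)_{\mathbf m}$ polynomially in $k$. Since the identity $(r-1)a/2=g-1-n/r$ together with the hypothesis $\gamma>g-1$ forces every base $\gamma-(j-1)a/2$, and hence $\beta-(j-1)a/2$, to be strictly positive, each factor $(\beta-(j-1)a/2)_{m_j}/(\gamma-(j-1)a/2)_{m_j}$ is a ratio of ordinary Pochhammer symbols, which by the standard Gamma-function asymptotics is $O\big((1+m_j)^{\beta-\gamma}\big)$ as $m_j\to\infty$. Taking the product over $j$ and using $m_j\le k$ yields $\max_{|\mathbf m|=k}(\beta)_{\mathbf m}/(\gamma)_{\mathbf m}\le C(1+k)^{r(\beta-\gamma)}$. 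Combining with $\|f_k\|_{A^2_\beta}\le \|f\|_{A^2_\beta}$ gives $\|f_k\|_{A^2_\gamma}\le C^{1/2}(1+k)^{r(\beta-\gamma)/2}\|f\|_{A^2_\beta}$, a bound of the required form $C'(1+k)^N$; hence $f\in S_\gamma^*$.
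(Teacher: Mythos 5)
Your proof is correct, and while it shares the same outer skeleton as the paper's proof -- both first invoke Lemma~\ref{lem:alphabeta} to place $f$ in a Hilbert space $A^2_\beta$ with $\beta$ large, and then compare $A^2_\gamma$- and $A^2_\beta$-norms of the homogeneous pieces $f_k$ with polynomially growing constants -- the mechanism you use for that comparison is genuinely different. The paper expands $f$ in the monomial orthonormal basis $\psi_m$ of $A^2_\beta$, applies the triangle inequality to get $\|f_k\|_{A^2_\gamma}\le \sum_{|m|=k}\bigl(\|\psi_m\|_{A^2_\gamma}/\|\psi_m\|_{A^2_\beta}\bigr)\|f\|_{A^2_\beta}$, cites Proposition 3.3 of Ch\'ebli--Faraut \cite{Chebli2004} for the polynomial bound on the norm ratios, and then needs a polynomial bound on $\dim P_k$ to absorb the number of terms. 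You instead work with the finer Schmid decomposition into $K$-types $P_{\mathbf m}$, where Schur's lemma plus the Faraut--Kor\'anyi formula give the \emph{exact} ratio $(\beta)_{\mathbf m}/(\gamma)_{\mathbf m}$ of the squared norms, and orthogonality of distinct $K$-types lets you bound $\|f_k\|^2_{A^2_\gamma}$ by a \emph{maximum} of these ratios times $\|f_k\|^2_{A^2_\beta}$ rather than a sum -- so no dimension count is needed -- while Stirling asymptotics for the Pochhammer factors (legitimate since your observation $(r-1)a/2=g-1-n/r$ and $\gamma>g-1$ make all bases positive) yield the explicit exponent $r(\beta-\gamma)/2$. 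The paper's route is shorter given the external citation; yours is essentially self-contained modulo the Faraut--Kor\'anyi norm computation, produces an explicit and sharper polynomial exponent, and makes visible exactly where the hypothesis $\gamma>g-1$ enters (positivity of the Pochhammer bases). One small point worth making explicit: your Parseval identity $\|f\|^2_{A^2_\beta}=\sum_k\|f_k\|^2_{A^2_\beta}$ requires that the homogeneous expansion of an $A^2_\beta$-function converges in norm; this is the same standard fact (density of polynomials plus rotation invariance, or equivalently that the monomials form an orthonormal basis) that the paper also invokes without proof, so it is not a gap, but you could avoid it entirely by deriving $\|f_k\|_{A^2_\beta}\le\|f\|_{A^2_\beta}$ from $f_k(z)=\frac{1}{2\pi}\int_0^{2\pi}f(e^{i\theta}z)e^{-ik\theta}\,d\theta$ and Minkowski's integral inequality.
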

\begin{proof}
  First, if $\alpha>g-1$, then by Lemma~\ref{lem:alphabeta},
  there is a $\beta>2\alpha/p+g-1$ for which $A^p_\alpha \subseteq A^2_\beta$.
  It is known that the monomials $\psi_m(z)=\frac{1}{\|z^m\|_{A^2_\beta}} z^m$, $m=(m_1,\ldots ,m_n)$ and
  $z^m = z_1^{m_1}\cdots z_n^{m_n}$,
  form an orthonormal basis for $A^2_\beta$, so
  $$ f = \sum_{m\geq 0}
   \langle f, \psi_m \rangle_{\beta} \psi_m.$$
  Let $P_k$ denote the space of polynomials of homogeneous degree $k$,
  then $f=\sum_k f_k$ for $f_k\in P_k$, and $f_k$ is given by
  $$ f_k = \sum_{|m|=k} \langle f, \psi_m \rangle_{\beta} \psi_m.$$
  Taking the $A^2_\gamma$ norm and using the triangle inequality, we get
  \begin{equation*}
    \| f_k\|_{A^2_\gamma}
    \leq \sum_{|m|=k} 
      \| \psi_m\|_{A^2_\gamma} |\langle f, \psi_m \rangle_{\beta}| 
    \leq
    \sum_{|m|=k} \frac{\| \psi_m\|_{A^2_\gamma}}{\|\psi_m\|_{A^2_\beta}}
    \| f\|_{A^2_\beta}.
  \end{equation*}
  By Proposition 3.3 in \cite{Chebli2004} there are integers
  $N_1$,$N_2$ and a constant $C$, which only depend on
  $\beta$ and $\gamma$,
  such that 
  $$
  \frac{\| \psi_m\|_{A^2_\gamma}}{\|\psi_m\|_{A^2_\beta}} \leq C(1+k)^{N_1-N_2}.
  $$
  From this estimate we get
  \begin{align*}
    \| f_k\|_{A^2_\gamma}
    &\leq  C\| f\|_{A^2_\beta} \sum_{|m|=k} (1+k)^{N_1-N_2} \\
    &\leq  C\| f\|_{A^2_\beta} \dim(P_k) (1+k)^{N_1-N_2}.
  \end{align*}
  The fact that there is a constant $A$ and an integer $N_3$ for which
  $$
  \dim(P_k) \leq A(1+k)^{N_3}
  $$
  finishes the proof that $\| f_k\|_\gamma \leq C (1+k)^N$ for some integer
  $N$ and for some constant $C$ which do not depend on $k$.
  Therefore, $f$ is in $S_\gamma^*$.
\end{proof}

To prove the first part of Theorem~\ref{thm:waveletcharbergman}
let $\psi=1$ and consider the representation
$\pi_\gamma$. 
Then the wavelet coefficients with $\psi=1$ and $f\in S_\gamma^*$
become
\begin{align}
  W^\gamma_\psi(f)(x) 
  &= \int_D f(z)\overline{J(x^{-1},z)}^{\gamma/g} h(z)^{\gamma-g} \,dz \notag\\
  &= \int_D f(z) K_\gamma(x\cdot o,z) J(x,o)^{\gamma/g} h(z)^{\gamma-g} \,dz \notag\\
  &= J(x,o)^{\gamma/g} f(x\cdot o).
    \label{eq:waveletforconstant}
  \end{align}
  We see that
  \begin{equation}
    \label{eq:isometry}
    \int_G |W^\gamma_\psi(f)(x)|^p h(x)^{\alpha-\gamma p/2} \,dx
    = \int_G |f(x\cdot o)|^p h(x)^{\alpha} \,dx
    = \int_D |f(z)|^p h(z)^{\alpha-g} \,dz,
  \end{equation}
    and from this it follows that if $W^\gamma_\psi(f)$ is in
  $L^p_{\alpha-\gamma p/2}(G)$ then $f\in A^p_\alpha$.
  If $f\in A^p_\alpha$, then by Lemma~\ref{lem:bergmanindual}
  $f$ is in $S_\gamma^*$. Then equation \eqref{eq:isometry} tells us that
  $W^\gamma_\psi(f)$ is in $L^p_{\alpha-\gamma p/2}(G)$.
  This finishes the proof of the first part of
  Theorem~\ref{thm:waveletcharbergman}.

\section{Proof of the second part of Theorem~\ref{thm:waveletcharbergman}}

Before we can prove the second part of the theorem, we need to better understand the
growth of wavelet coefficients for vectors in $S_\gamma$.
The following gives the estimates of wavelet coefficients which
turn out to be a crucial part of our proof.

\begin{proposition}
  \label{matrixdecay}
  If $u,v\in S_\gamma$ and $x\in G$,
  then
  $$
  |W^\gamma_u(v)(x)| \leq C h^{\gamma/2}*h^{\gamma/2}(x).
  $$
  Moreover, if either $u$ or $v$ is a polynomial, then
  $$
  |W^\gamma_u(v)(x)| \leq C h^{\gamma/2}(x).
  $$
\end{proposition}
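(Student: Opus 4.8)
The plan is to reduce the whole proposition to two facts: that every $v\in S_\gamma$ is bounded on $D$, and that the matrix coefficient against the constant vector is explicit. The second is already \eqref{eq:waveletforconstant}, which gives $|W^\gamma_1(f)(x)|=h(x)^{\gamma/2}|f(x\cdot o)|$, hence $|W^\gamma_1(f)(x)|\le\|f\|_\infty\,h(x)^{\gamma/2}$. For boundedness I would decompose $v=\sum_k v_k$ into homogeneous parts and show $\sum_k\|v_k\|_{\infty,\overline D}<\infty$. Using $\sup_{\rho\overline D}|v_k|=\rho^{k}\sup_{\overline D}|v_k|$ together with the reproducing-kernel bound $|v_k(z)|\le\|v_k\|_{A^2_\gamma}h(z)^{-\gamma/2}$ on the compact set $\rho\overline D$ (where $h$ is bounded below), and optimizing over $\rho\to1$, one obtains $\|v_k\|_{\infty,\overline D}\le C(1+k)^{M}\|v_k\|_{A^2_\gamma}$ for a fixed $M$; the rapid decay of $\|v_k\|_{A^2_\gamma}$ then makes the series converge. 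This polynomial comparison of norms is of the type established in Proposition 3.3 of \cite{Chebli2004}.

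For the first estimate I would exploit that \eqref{eq:isometry}, read at $p=2$ and $\alpha=\gamma$, says exactly that $W^\gamma_1$ is a constant multiple of an isometry $A^2_\gamma\to L^2(G)$; equivalently $\psi=1$ is admissible and there is a weak reconstruction $v=c\int_G W^\gamma_1(v)(y)\,\pi_\gamma(y)1\,dy$. Pairing this with $\pi_\gamma(x)u$ and using unitarity and the cocycle identities yields the convolution identity
\[
W^\gamma_u(v)(x)=c\int_G W^\gamma_1(v)(y)\,\overline{W^\gamma_1(u)(x^{-1}y)}\,dy .
\]
Taking absolute values and inserting $|W^\gamma_1(v)(y)|\le\|v\|_\infty h(y)^{\gamma/2}$ and $|W^\gamma_1(u)(x^{-1}y)|\le\|u\|_\infty h(x^{-1}y)^{\gamma/2}=\|u\|_\infty h(y^{-1}x)^{\gamma/2}$ (using $h(g)=h(g^{-1})$ from Lemma~\ref{lem:2.1}(e)), the right-hand side is exactly $C\,(h^{\gamma/2}*h^{\gamma/2})(x)$, proving the first bound.

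For the sharp estimate I would instead use the adjoint form $W^\gamma_u(v)(x)=\langle\pi_\gamma(x^{-1})v,u\rangle_\gamma=\sum_k\langle(\pi_\gamma(x^{-1})v)_k,u_k\rangle_{A^2_\gamma}$, where $F_k$ is the degree-$k$ homogeneous part. When $u$ is a polynomial of degree $\le m$ only finitely many terms survive, so by Cauchy--Schwarz it suffices to prove $\|(\pi_\gamma(x^{-1})v)_k\|_{A^2_\gamma}\le C_k\,h(x)^{\gamma/2}$ for each fixed $k$. I would factor $\pi_\gamma(x^{-1})v=j_\gamma(x^{-1},o)\,G_x$ with $G_x(z)=\dfrac{j_\gamma(x^{-1},z)}{j_\gamma(x^{-1},o)}\,v(x^{-1}\cdot z)$; since $|j_\gamma(x^{-1},o)|=h(x^{-1})^{\gamma/2}=h(x)^{\gamma/2}$, it remains to bound $\|(G_x)_k\|_{A^2_\gamma}$ uniformly in $x$. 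On a fixed closed ball $\overline B_\rho\subset D$ about $o$ one has
\[
|G_x(z)|\le\|v\|_\infty\left(\frac{h(x^{-1}\cdot z)}{h(z)\,h(x)}\right)^{\gamma/2},
\]
and writing $z=u_z\cdot o$ with $u_z$ ranging over a fixed compact set, Lemma~\ref{lem:2.1}(f) gives $h(x^{-1}\cdot z)=h(x^{-1}u_z)\le C\,h(x^{-1})=C\,h(x)$, while $h(z)$ is bounded below on $\overline B_\rho$. Hence $\sup_{\overline B_\rho}|G_x|\le C$ uniformly in $x$, so Cauchy's estimate bounds the Taylor coefficients of $G_x$ up to degree $k$ and therefore $\|(G_x)_k\|_{A^2_\gamma}\le C_k$. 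This settles the case where $u$ is a polynomial; the case where $v$ is a polynomial follows from the symmetry $|W^\gamma_u(v)(x)|=|W^\gamma_v(u)(x^{-1})|$ and $h(x^{-1})=h(x)$.

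The main obstacle is the sharp estimate, and specifically the uniform control of $G_x$ near $o$: this is exactly where the geometry of $D$ enters, through the factorization that isolates the scalar $h(x)^{\gamma/2}$ from the cocycle and through Lemma~\ref{lem:2.1}(f) comparing $h(x^{-1}u)$ with $h(x)$ over a compact family of $u$'s. By contrast the first estimate is comparatively soft, resting only on admissibility of the constant vector and on the boundedness of elements of $S_\gamma$. I would also note that bounding $|v(z)|$ by $\|v\|_\infty$ directly in the integral defining $W^\gamma_\phi(v)$ is too lossy for the sharp bound (it produces a spurious logarithmic factor at the borderline Forelli--Rudin exponent), which is why the homogeneous decomposition with its built-in cancellation is essential.
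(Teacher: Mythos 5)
Your proposal is correct, and it has the same two-layer structure as the paper's proof, but the two layers are realized differently. For the convolution bound you are essentially on the paper's track: the identity $W^\gamma_u(v)=c\,W^\gamma_1(v)*W^\gamma_u(1)$, which you obtain from square-integrability of the constant vector (your reading of \eqref{eq:isometry} at $p=2$, $\alpha=\gamma$) and the orthogonality relations, is exactly the identity the paper derives by direct computation, namely by using \eqref{eq:waveletforconstant} to write $W_1(v)(y)W_u(1)(y^{-1}x)=h(y)^{\gamma}v(y\cdot o)\overline{\pi_\gamma(x)u(y\cdot o)}$ and integrating over $G$ with the chosen Haar normalization; after that, both proofs insert $|W^\gamma_1(f)|\le\|f\|_\infty h^{\gamma/2}$ and the Ch\'ebli--Faraut boundedness of $S_\gamma$-functions. (The cocycle factors you suppress are unimodular and vanish under absolute values, the same point the paper glosses when it notes the integrand is well defined on $G$.) Where you genuinely diverge is the sharp estimate. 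The paper, taking $v$ polynomial and $u\in S_\gamma$, expands $K_\gamma(z,x\cdot o)=\sum_m\psi_m(z)\overline{\psi_m(x\cdot o)}$ inside the matrix coefficient; orthogonality of homogeneous components truncates the sum to $|m|\le\deg v$ independently of $x$, each surviving integral is bounded uniformly in $x$ using $\|u\|_\infty$ and the fact that $\mu_\gamma$ is a probability measure, $|\psi_m(x\cdot o)|$ is bounded because polynomials are bounded on $D$, and the prefactor $J(x,o)^{\gamma/g}$ yields $h^{\gamma/2}(x)$. You, taking $u$ polynomial, instead factor out the scalar $j_\gamma(x^{-1},o)$ (your source of $h^{\gamma/2}(x)$), prove that the normalized translate $G_x$ is bounded on a fixed small ball uniformly in $x$ via the identity $|J(x^{-1},z)|^{2/g}=h(x^{-1}\cdot z)/h(z)$ together with Lemma~\ref{lem:2.1}(f), and then control the finitely many homogeneous components of $G_x$ that pair against $u$ by Cauchy estimates; the case of $v$ polynomial follows by the symmetry $|W^\gamma_u(v)(x)|=|W^\gamma_v(u)(x^{-1})|$ and $h(x^{-1})=h(x)$, which the paper also uses implicitly. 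Both routes rest on the same two pillars — boundedness of elements of $S_\gamma$ and orthogonality of homogeneous components — so the difference is in mechanism rather than in the required inputs. What your version buys is a kernel-expansion-free argument that localizes all analysis to a compact ball, making explicit the geometric input $h(x^{-1}u)\le C\,h(x)$ for $u$ in a compact set; what the paper's version buys is brevity, since integrating bounded functions against the probability measure $\mu_\gamma$ makes the uniformity in $x$ immediate with no normal-families step.
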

\begin{remark}
  Using this result and Theorem~\ref{thm:FK} it is possible to show that
  $|W_u(v)| \leq C h^{\gamma/2 - \epsilon}$ for any $\epsilon>(r-1)a/2$.
  We expect that this inequality can be verified for all $\epsilon>0$.
  This would enable us to answer some of the questions
  posed in Remark~\ref{rem:waveletchar}.
\end{remark}
\begin{proof}
  We first assume that $u\in S_\gamma$ and $v$ is a polynomial.
  Let us calculate the wavelet coefficient for $x\in\widetilde{G}$
  \begin{align*}
    W^\gamma_u(v)(x)
    &= (v,\rho_\gamma(x)u) \\
    &= \int_D v(z)
      \overline{J(x^{-1},z)^{\gamma/g} u(x^{-1}\cdot z) } \,d\mu_\gamma(z) \\
    &= J(x,o)^{\gamma/g}
      \int_D v(z) \overline{K(z,x\cdot o)^{\gamma/g} u(x^{-1}\cdot z) } \,d\mu_\gamma(z) \\
    &= J(x,o)^{\gamma/g}
      \int_D v(z) \overline{K_\gamma(z,x\cdot o) u(x^{-1}\cdot z) } \,d\mu_\gamma(z).
  \end{align*}
  Since $K_\gamma$ is the reproducing kernel for $A^2_\gamma$ and
  since $\psi_m = \frac{1}{\| z^m\|_{A^2_\gamma}}z^m$ is an
  orthonormal basis for $A^2_\gamma$, we know that
  $$
  K_\gamma(z,w) = \sum_{m\geq 0} \psi_m(z)\overline{\psi_m(w)}.
  $$
  This yields
  \begin{equation*}
    W^\gamma_u(v)(x)
    = J(x,o)^{\gamma/g}
      \sum_{m\geq 0} \psi_m(x\cdot o)
      \int_D v(z) \overline{\psi_m(z) u(x^{-1}\cdot z) } \,d\mu_\gamma(z).
  \end{equation*}
  Since $v$ is a polynomial  
    and $z\mapsto u(x^{-1}\cdot z)$ is holomorphic,
  we get that the sum over $m$ is finite, i.e.
  \begin{equation*}
    W^\gamma_u(v)(x)
    = J(x,o)^{\gamma/g}
      \sum_{m\in M} \psi_m(x\cdot o)
      \int_D v(z) \overline{\psi_m(z) u(x^{-1}\cdot z) } \,d\mu_\gamma(z).
  \end{equation*}
  where $M$ is a finite index set  (which can be chosen independently of $x$).
  By Lemma 1.3 and Proposition 3.3 in \cite{Chebli2004} we
  get that $u$ is bounded. Therefore, 
  there is a constant $C_m$ for which
  $$
  \int_D |v(z) \overline{\psi_m(z) u(x^{-1}\cdot z) }| \,d\mu_\gamma(z)
  \leq C_m.
  $$
  By the finiteness of $M$ we can therefore derive that for
  $x\in G$ we have
  $$
  |W^\gamma_u(v)(x)| \leq C |J(x,o)|^{\gamma/g} = Ch^{\gamma/2}(x).
  $$

  Now assume that $v\in S_\gamma$
  is not a polynomial.
  For $x,y\in\widetilde{G}$ 
  equation
  (\ref{eq:waveletforconstant}) gives
  $$
  W_1(v)(y)W_u(1)(y^{-1}x)
  = h(y)^{\gamma} v(y\cdot o)\overline{\pi_\gamma(x)u(y\cdot o)}.
  $$
  This function is well-defined for $y$ in $G$, and
  $$
  \int_G W_1(v)(y)W_u(1)(y^{-1}x) \,dy
  = \int_D v(z)\overline{\pi_\gamma(x)u(z)} h(z)^{\gamma-g}\,dz
  = W_u^\gamma(v)(x).
  $$
  From this we get
  $$
  |W^\gamma_u(v)|
  \leq  |W^\gamma_1(v)|*|W^\gamma_u(1)|.
  $$
  By unitarity $|W_1^\gamma(v)(x)|=|W_v^\gamma(1)(x^{-1})|
  \leq Ch^{\gamma/2}(x^{-1}) = Ch^{\gamma/2}(x)$
  which concludes the proof.
\end{proof}

Recall that a Banach space  $B$ of functions on $G$ is said to be solid if $f\in B$, $f\ge 0$, and $g$ is measurable function
on $G$ with $|g|\le f$ implies that $g\in B$. A typical examples are the spaces $L^p(G)$, $1\le p\le \infty$.

\begin{corollary}
  \label{cor:boundedconv}
  If $f\mapsto f*h^{\gamma/2}$
  is bounded on a solid Banach function
  space $B$ on $G$, then $f\mapsto f*|W^\gamma_u(v)|$ is bounded for
  all $u,v$ in $S_\gamma$.
\end{corollary}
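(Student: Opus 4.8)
The plan is to deduce the corollary directly from the pointwise majorization in Proposition~\ref{matrixdecay}, using the associativity of convolution and the solidity of $B$; the argument is short because all the analytic work has already been done in establishing the decay estimate. First I would reduce to nonnegative functions: for $f\in B$ the triangle inequality inside the convolution integral gives
\[
\bigl| f * |W^\gamma_u(v)| \bigr|(x) \le \int_G |f(y)|\, |W^\gamma_u(v)|(y^{-1}x)\,dy = \bigl(|f| * |W^\gamma_u(v)|\bigr)(x),
\]
so it suffices to control $|f| * |W^\gamma_u(v)|$. Since $B$ is solid we also have $|f|\in B$ with $\bigl\||f|\bigr\|_B = \|f\|_B$, which is the standard quantitative form of solidity.

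Next I would insert the estimate of Proposition~\ref{matrixdecay}, namely $|W^\gamma_u(v)| \le C\, h^{\gamma/2} * h^{\gamma/2}$. Because every function appearing is nonnegative, Tonelli's theorem justifies both the monotonicity step and the regrouping of the iterated integrals, yielding
\[
|f| * |W^\gamma_u(v)| \le C\, |f| * \bigl( h^{\gamma/2} * h^{\gamma/2} \bigr) = C\,\bigl( |f| * h^{\gamma/2}\bigr) * h^{\gamma/2}.
\]
Here the final identity is just associativity of convolution on $G$, again valid without integrability hypotheses since the integrands are nonnegative.

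The concluding step applies the hypothesis twice. Writing $M$ for the norm of the bounded operator $g\mapsto g*h^{\gamma/2}$ on $B$, I would apply it first to $|f|$ and then to $|f|*h^{\gamma/2}$, obtaining
\[
\bigl\| \bigl( |f| * h^{\gamma/2}\bigr) * h^{\gamma/2} \bigr\|_B \le M^2 \bigl\| |f| \bigr\|_B = M^2 \|f\|_B .
\]
Thus the nonnegative function $C\,\bigl( |f| * h^{\gamma/2}\bigr) * h^{\gamma/2}$ lies in $B$, and since it dominates $\bigl| f * |W^\gamma_u(v)| \bigr|$ pointwise, solidity gives $f * |W^\gamma_u(v)| \in B$ with $\bigl\| f * |W^\gamma_u(v)| \bigr\|_B \le C M^2 \|f\|_B$. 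I expect the only point requiring care to be the bookkeeping around solidity: the definition as stated guarantees membership, so I would either adopt its standard strengthening to the norm inequality $\|g\|_B \le \|h\|_B$ whenever $|g|\le h$, or recover that inequality from the closed graph theorem. Everything else — the modulus estimate, the associativity, and the two applications of the bounded operator — is unconditional for nonnegative functions.
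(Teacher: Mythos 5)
Your proof is correct and follows essentially the same route as the paper's: majorize $|W^\gamma_u(v)|$ by $C\,h^{\gamma/2}*h^{\gamma/2}$ via Proposition~\ref{matrixdecay}, pass to $|f|$ by solidity, and apply the bounded convolution operator twice after regrouping by associativity. The paper's version is simply terser, leaving the Tonelli/associativity step and the quantitative (norm-inequality) form of solidity implicit, both of which you rightly spell out.
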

\begin{proof}
  From the previous result we get
  $$
  | f*|W^\gamma_u(v)| |
  \leq C |f|*h^{\gamma/2}*h^{\gamma/2}.
  $$
  Since $B$ is solid, $|f|$ is in $B$ if $f$ is, and
  then $|f|*h^{\gamma/2}*h^{\gamma/2}$ is in $B$ by assumption.
  By solidity $f*|W^\gamma_u(v)|$ is in $B$ and
  $\| f*|W^\gamma_u(v)|\|_B \leq C \|f\|_B$.
\end{proof}

So far we have not needed to introduce a projective representation of
$G$ along with its cocycle. We choose to introduce them now,
since subsequent arguments are easily
carried out based on established knowledge
about square integrable  projective representations and twisted convolution.
In particular we will prove the second part of
Theorem~\ref{thm:waveletcharbergman}
by using twisted convolution to swap the vector $\psi=1$ in
$W_\psi^\gamma(f)$ by an arbitrary vector in $S_\gamma$.

Let $\rho_\gamma$ be a projective representation of $G$
corresponding to the representation $\pi_\gamma$ of $\widetilde{G}$,
and let $\sigma_\gamma$ be the corresponding cocycle for $\rho_\gamma$.
We retain the notation $W_\psi^\gamma:S^*_\gamma\to \mathcal{M}(G)$
for the wavelet transform
$$
W_\psi^\gamma(f)(x) = \langle f,\rho_\gamma(x)\psi\rangle,
$$
and note that it agrees with the previous notation up to a unimodular factor.
Here $\mathcal{M}(G)$ denotes the space
  of Borel measurable functions on $G$.
  Also define twisted convolution of $f,g$ by
  $$
  f\#_\gamma g(x) = \int_G f(y)g(y^{-1}x)
  \overline{\sigma_\gamma(y,y^{-1}x)}\,dy
  $$
  when the integral makes sense.
  \begin{lemma} \label{lem:twistedconv}
      Let $\phi,\psi$ be in $S_\gamma$. If $F\in L^p_{\alpha-\gamma p/2}(G)$,
      then $F\#W^\gamma_\psi(\phi)$ exists as an integrable function
      if     $g-1 + (p-1)(r-1)a/2< \alpha < p(\gamma-g+1)+g-1-(r-1)a/2$.
      Moreover, $F\mapsto F\#W^\gamma_\psi(\phi)$ is bounded
      $L^p_{\alpha-\gamma p/2}(G)\to L^p_{\alpha-\gamma p/2}(G)$ .
  \end{lemma}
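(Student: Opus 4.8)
The plan is to reduce the twisted convolution to an ordinary convolution by exploiting that the cocycle $\sigma_\gamma$ is unimodular, and then to invoke the boundedness results already established. First I would record the pointwise bound
\[
|F\#_\gamma g(x)| = \left| \int_G F(y)g(y^{-1}x)\overline{\sigma_\gamma(y,y^{-1}x)}\,dy \right| \leq \int_G |F(y)|\,|g(y^{-1}x)|\,dy = |F|*|g|(x),
\]
valid for arbitrary measurable $F,g$, which holds because $\sigma_\gamma$ takes values in the unit circle, being the cocycle of a projective representation arising from the unitary representation $\pi_\gamma$ of $\widetilde{G}$. Applying this with $g=W^\gamma_\psi(\phi)$ gives the domination $|F\#_\gamma W^\gamma_\psi(\phi)| \leq |F|*|W^\gamma_\psi(\phi)|$.

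Next I would observe that the weighted space $L^p_{\alpha-\gamma p/2}(G)$ is a solid Banach function space, and that the admissible interval for $\alpha$ in the statement coincides exactly with the interval appearing in Theorem~\ref{thm:convoperator}. Consequently the convolution operator $f\mapsto f*h^{\gamma/2}$ is bounded on $L^p_{\alpha-\gamma p/2}(G)$, so Corollary~\ref{cor:boundedconv}, applied with $u=\psi$ and $v=\phi$, shows that $f\mapsto f*|W^\gamma_\psi(\phi)|$ is bounded on $L^p_{\alpha-\gamma p/2}(G)$ with some constant $C$. (Under the hood this is Proposition~\ref{matrixdecay}, which bounds $|W^\gamma_\psi(\phi)|$ by $C\,h^{\gamma/2}*h^{\gamma/2}$, combined with two applications of Theorem~\ref{thm:convoperator}.)

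Finally I would assemble the pieces. For $F\in L^p_{\alpha-\gamma p/2}(G)$ solidity of the norm gives $\| |F| \|_{L^p_{\alpha-\gamma p/2}}=\|F\|_{L^p_{\alpha-\gamma p/2}}$, so the previous step places $|F|*|W^\gamma_\psi(\phi)|$ in $L^p_{\alpha-\gamma p/2}(G)$ with norm at most $C\|F\|_{L^p_{\alpha-\gamma p/2}}$. In particular this dominating convolution is finite for almost every $x$, which is precisely what guarantees that the integral defining $F\#_\gamma W^\gamma_\psi(\phi)(x)$ converges absolutely for a.e.\ $x$; hence the twisted convolution exists as a measurable function. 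The pointwise domination together with solidity then yields membership and the estimate
\[
\|F\#_\gamma W^\gamma_\psi(\phi)\|_{L^p_{\alpha-\gamma p/2}} \leq \big\| \,|F|*|W^\gamma_\psi(\phi)|\, \big\|_{L^p_{\alpha-\gamma p/2}} \leq C\|F\|_{L^p_{\alpha-\gamma p/2}}.
\]

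The argument carries essentially no analytic difficulty once Proposition~\ref{matrixdecay} and Theorem~\ref{thm:convoperator} are in hand, since the whole content is a packaging of those results; the only points requiring care are the verification that $\sigma_\gamma$ is unimodular (so that the twist may be discarded under absolute values) and the bookkeeping confirming that the interval for $\alpha$ matches that of Theorem~\ref{thm:convoperator}. I expect the one genuine subtlety to be the existence claim: one must deduce well-definedness of the twisted convolution integral from the a.e.\ finiteness of the dominating ordinary convolution, rather than assuming convergence a priori.
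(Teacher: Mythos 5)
Your proof is correct and follows essentially the same route as the paper: both dominate the twisted convolution by $|F|*|W^\gamma_\psi(\phi)|$ using unimodularity of the cocycle, bound this by $C(|F|*h^{\gamma/2})*h^{\gamma/2}$ via Proposition~\ref{matrixdecay}, and apply Theorem~\ref{thm:convoperator} twice to get both existence (from a.e.\ finiteness of the dominating convolution) and the norm bound. The only cosmetic difference is that you invoke Corollary~\ref{cor:boundedconv} as a packaged statement, whereas the paper inlines that same argument.
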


  \begin{proof}
    We know from Proposition~\ref{matrixdecay}
    that $|F|*|W^\gamma_\psi(\phi)|\leq C |F|*(h^{\gamma/2}*h^{\gamma/2})$.
    By Tonelli's Theorem
    this equals $(|F|*h^{\gamma/2})*h^{\gamma/2}$.
    From Theorem~\ref{thm:convoperator} we know that
     $|F|*h^{\gamma/2}$ is in $L^p_{\alpha-\gamma p/2}(G)$,
     which means that $|F|*(h^{\gamma/2}*h^{\gamma/2})$
     exists as an integrable function. This also means
     that $|F|*|W^\gamma_\psi(\phi)|$
     exists and therefore so
     does $F\#W^\gamma_\psi(\phi)$.
    Another application of Theorem~\ref{thm:convoperator}
   proves the continuity statement.
   \end{proof}    
    
 For a function $F$ on $G$ and a vector $X\in\mathfrak{g}$
 define the derivative
 $$
 XF(x) = \frac{d}{dt}\Big|_{t=0} F(e^{-tX}x).
 $$
 \begin{lemma}\label{lem:growingsmoothfunction}
   There is a sequence of smooth compactly supported
   functions $\psi_n:G\to [0,1]$ such that
   $\psi_{n+1}\geq \psi_n$ and for any finite collection
   $X_1,X_2,\dots,X_N\in \mathfrak{g}$
   there is a constant $C_N$ such that
   for every $n$ we have
   $\| X_1X_2\dots X_N\psi_n\|_\infty \leq C_N$.
 \end{lemma}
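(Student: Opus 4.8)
The plan is to manufacture the $\psi_n$ by convolving a single fixed smooth bump with the indicators of a compact exhaustion of $G$, exploiting that the operators $X$ are infinitesimal left translations and therefore pass, through a left convolution, onto the smooth factor. First I would fix a relatively compact, symmetric open neighborhood $V$ of the identity and set $A_n=\overline{V^n}$. These sets are compact, symmetric, and increasing, and since $\bigcup_n V^n$ is an open (hence also closed) subgroup of the connected group $G$, they exhaust $G$. Next choose $\phi\in C_c^\infty(G)$ with $\phi\geq 0$ and $\int_G \phi\,d\mu_G=1$, and define
$$
\psi_n=\phi*\mathbf{1}_{A_n},\qquad
\psi_n(x)=\int_G \phi(y)\,\mathbf{1}_{A_n}(y^{-1}x)\,d\mu_G(y)=\int_{xA_n}\phi\,d\mu_G .
$$
From the last expression $0\le \psi_n\le \int_G\phi=1$, so $\psi_n$ maps into $[0,1]$; since $\mathbf{1}_{A_{n+1}}\ge\mathbf{1}_{A_n}$ and $\phi\ge 0$ we get $\psi_{n+1}\ge\psi_n$; and as $xA_n\uparrow G$ we have $\psi_n\uparrow 1$ pointwise. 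Moreover $\operatorname{supp}\psi_n\subseteq \operatorname{supp}(\phi)\cdot A_n$ is compact, and $\psi_n$ is smooth because every derivative falls on the smooth, compactly supported factor $\phi$.

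The key step is the intertwining identity $X(\phi*H)=(X\phi)*H$ for the left-translation generator $X$. To verify it, substitute $y=e^{-tX}z$ in
$$
(\phi*H)(e^{-tX}x)=\int_G \phi(y)\,H(y^{-1}e^{-tX}x)\,d\mu_G(y);
$$
by left-invariance of $d\mu_G$ this becomes $\int_G \phi(e^{-tX}z)\,H(z^{-1}x)\,d\mu_G(z)$, and differentiating at $t=0$ (legitimate by the compact support of $\phi$, which confines the $z$-integration to a fixed compact set for $t$ near $0$ and permits differentiation under the integral) moves the derivative onto $\phi$, using $\tfrac{d}{dt}\big|_{t=0}\phi(e^{-tX}z)=(X\phi)(z)$. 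Iterating gives
$$
X_1X_2\cdots X_N\,\psi_n=(X_1X_2\cdots X_N\,\phi)*\mathbf{1}_{A_n}.
$$

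Finally, since $0\le \mathbf{1}_{A_n}\le 1$ I would bound
$$
|X_1X_2\cdots X_N\,\psi_n(x)|
\le \int_G |X_1X_2\cdots X_N\,\phi(y)|\,d\mu_G(y)
=\|X_1X_2\cdots X_N\,\phi\|_{L^1(G)}=:C_N,
$$
which is finite because $X_1X_2\cdots X_N\,\phi\in C_c^\infty(G)$, and which depends only on $X_1,\dots,X_N$ and $\phi$, not on $n$ or $x$. The main obstacle is precisely the intertwining identity: one must use the left-invariance of Haar measure correctly in the substitution and justify differentiation under the integral. Once that is in place, the uniform bound is immediate from the compact support of the single fixed bump $\phi$, and all the remaining properties ($[0,1]$-valued, monotone, smooth, compactly supported) are direct bookkeeping.
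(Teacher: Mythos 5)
Your proof is correct, but it takes a genuinely different route from the paper. The paper's proof is a one-line citation: it takes $\psi_n^\vee(x)=\psi_n(x^{-1})$ to be partial sums of a partition of unity constructed in \cite{Triebel1988} (p.~329), the inversion being there because the operators $X$ in the lemma generate \emph{left} translations (right-invariant vector fields), while the cited uniform derivative estimates are adapted to the other side. Your construction $\psi_n=\phi*\mathbf{1}_{A_n}$, with $A_n=\overline{V^n}$ a compact symmetric exhaustion of the connected group $G$ and $\phi$ a fixed nonnegative bump of integral one, is self-contained and elementary. The one substantive point, the intertwining identity $X(\phi*H)=(X\phi)*H$, is exactly right: since $X$ generates left translation, and left translation of a convolution passes onto the left factor, your substitution $y=e^{-tX}z$ using left-invariance of Haar measure is valid, and differentiation under the integral is justified because the $z$-integration stays in a fixed compact set for small $t$. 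Iterating gives $X_1\cdots X_N\psi_n=(X_1\cdots X_N\phi)*\mathbf{1}_{A_n}$ and hence the $n$-independent bound $C_N=\|X_1\cdots X_N\phi\|_{L^1(G)}$. What your approach buys: explicit constants, no external reference, and the additional property that $\psi_n\equiv 1$ on an exhausting family of compact sets (for compact $C$ one has $x^{-1}\operatorname{supp}\phi\subseteq A_n$ for all $x\in C$ once $n$ is large), which is precisely the property invoked later in the proof of Proposition~\ref{prop:swapvector}; so your construction serves both the lemma and its application. Two routine points you could spell out slightly more are the manifold smoothness of $\psi_n$ (cleanest from $\psi_n(x)=\int_{A_n}\phi(xy)\,dy$, differentiating under the integral in local coordinates) and the fact that each $X_k\phi$ is again in $C_c^\infty(G)$ so the intertwining identity can be re-applied at each step; neither is a gap.
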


 \begin{proof}
   We can take $\psi_n^\vee(x)=\psi_n(x^{-1})$
   to be partial sums of a partition of unity
   as contstructed on p. 329 in \cite{Triebel1988}.
 \end{proof}
 
 \begin{proposition}\label{prop:swapvector}
     Let $\psi,\phi$ and $\eta$ be in $S_\gamma$
     and assume that
    $$g-1 + (p-1)(r-1)a/2< \alpha < p(\gamma-g+1)+g-1-(r-1)a/2.$$
   If
   $f$ is in $S_\gamma^*$ and $W_\psi^\gamma(f)$ is in $L^p_{\alpha-\gamma p/2}(G)$,
   then $W^\gamma_\psi(f)\#W^\gamma_\phi(\eta)$ equals
   $d_\gamma^{-1} \langle \eta,\psi \rangle_{A^2_\gamma} W^\gamma_\phi(f)$
   where $d_\gamma$ is the formal dimension of $\pi_\gamma$.
\end{proposition}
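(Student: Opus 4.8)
The plan is to deduce the identity from the orthogonality relations (reproducing formula) for the square-integrable holomorphic discrete series representation $\rho_\gamma$, whose formal degree is $d_\gamma$, with the cocycle $\sigma_\gamma$ bookkeeping absorbed exactly by the twist built into $\#_\gamma$, and then to promote the resulting Hilbert-space identity to the distribution $f\in S_\gamma^*$ by moving the $S_\gamma^*$--$S_\gamma$ pairing inside the group integral. Throughout, the hypotheses on $\alpha$ are used only to guarantee, via Lemma~\ref{lem:twistedconv}, that $W_\psi^\gamma(f)\#W_\phi^\gamma(\eta)$ converges absolutely, so that every rearrangement below takes place between genuine integrable functions.

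First I would unfold the twisted convolution. Writing $W_\psi^\gamma(f)(y)=\langle f,\rho_\gamma(y)\psi\rangle$, the only nontrivial step is to re-express $W_\phi^\gamma(\eta)(y^{-1}x)$ through matrix coefficients involving a single group element $\rho_\gamma(y)$. Using the cocycle relation $\rho_\gamma(y)\rho_\gamma(y^{-1}x)=\sigma_\gamma(y,y^{-1}x)\rho_\gamma(x)$ together with unitarity of $\rho_\gamma$ (so that $\rho_\gamma(y)^{-1}=\rho_\gamma(y)^{*}$), one obtains a covariance formula of the shape $W_\phi^\gamma(\eta)(y^{-1}x)=\overline{\sigma_\gamma(y,y^{-1}x)}\,\langle\rho_\gamma(y)\eta,\rho_\gamma(x)\phi\rangle$. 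The unimodular factor in the definition of $\#_\gamma$ is there precisely to cancel this cocycle, so that after the cancellation the twisted convolution reduces to
\[
W_\psi^\gamma(f)\#W_\phi^\gamma(\eta)(x)=\int_G\langle f,\rho_\gamma(y)\psi\rangle\,\overline{\langle\rho_\gamma(x)\phi,\rho_\gamma(y)\eta\rangle}\,dy,
\]
an integral in which only single-element matrix coefficients appear.

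For $f$ in the Hilbert space $A^2_\gamma$ this integral is evaluated directly by the Duflo--Moore orthogonality relations for the irreducible square-integrable representation $\rho_\gamma$, which give $\int_G\langle f,\rho_\gamma(y)\psi\rangle\,\overline{\langle\rho_\gamma(x)\phi,\rho_\gamma(y)\eta\rangle}\,dy=d_\gamma^{-1}\langle f,\rho_\gamma(x)\phi\rangle\langle\eta,\psi\rangle_{A^2_\gamma}=d_\gamma^{-1}\langle\eta,\psi\rangle_{A^2_\gamma}\,W_\phi^\gamma(f)(x)$. Since $f\in S_\gamma^*$ need not lie in $A^2_\gamma$, I would record this instead as an identity of vectors: the weak integral $\Phi_x:=\int_G\overline{W_\phi^\gamma(\eta)(y^{-1}x)}\,\sigma_\gamma(y,y^{-1}x)\,\rho_\gamma(y)\psi\,dy$ equals $d_\gamma^{-1}\overline{\langle\eta,\psi\rangle_{A^2_\gamma}}\,\rho_\gamma(x)\phi$, which one verifies by pairing with an arbitrary $u\in A^2_\gamma$ and invoking the same orthogonality relations. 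Because $S_\gamma$ is $\pi_\gamma$-invariant, the value $\rho_\gamma(x)\phi$, and hence $\Phi_x$, lies in $S_\gamma$.

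To reach a general $f\in S_\gamma^*$ I would use conjugate-linearity of the pairing in its second slot to write $W_\psi^\gamma(f)\#W_\phi^\gamma(\eta)(x)=\langle f,\Phi_x\rangle$, and then substitute the value of $\Phi_x$ from the previous paragraph, yielding $\langle f,\Phi_x\rangle=d_\gamma^{-1}\langle\eta,\psi\rangle_{A^2_\gamma}\,\langle f,\rho_\gamma(x)\phi\rangle=d_\gamma^{-1}\langle\eta,\psi\rangle_{A^2_\gamma}\,W_\phi^\gamma(f)(x)$. The main obstacle, and the only place where work beyond bookkeeping is required, is the interchange $\langle f,\int_G(\cdots)\,dy\rangle=\int_G\langle f,(\cdots)\rangle\,dy$: since $f$ is only continuous on $S_\gamma$, this forces the vector-valued integrand $y\mapsto\overline{W_\phi^\gamma(\eta)(y^{-1}x)}\,\sigma_\gamma(y,y^{-1}x)\,\rho_\gamma(y)\psi$ to be integrable with respect to the Fréchet topology of $S_\gamma$, not merely weakly in $A^2_\gamma$. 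I would establish this by combining the decay $|W_\phi^\gamma(\eta)|\le C\,h^{\gamma/2}*h^{\gamma/2}$ from Proposition~\ref{matrixdecay} with polynomial bounds on the defining seminorms of $S_\gamma$ applied to $\rho_\gamma(y)\psi$ (available since $S_\gamma$ consists of smooth vectors), the parameter range ensuring through Theorem~\ref{thm:convoperator} that the resulting scalar majorant is integrable on $G$. Once this interchange is justified, the two evaluations of $\langle f,\Phi_x\rangle$ coincide and the proof is complete.
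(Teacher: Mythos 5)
Your reduction of the twisted convolution to single matrix coefficients and your treatment of the Hilbert-space case via the orthogonality relations agree with the paper's starting point. The genuine gap is exactly at the step you yourself flag as ``the main obstacle'': the Bochner integrability of $y\mapsto \overline{W^\gamma_\phi(\eta)(y^{-1}x)}\,\sigma_\gamma(y,y^{-1}x)\,\rho_\gamma(y)\psi$ in the Fr\'echet topology of $S_\gamma$ is false, and the ``polynomial bounds on the defining seminorms of $S_\gamma$ applied to $\rho_\gamma(y)\psi$'' that you invoke do not exist. The topology of $S_\gamma$ is the smooth-vector topology, with seminorms $q_{X_1,\dots,X_m}(u)=\|\rho_\gamma(X_1)\cdots\rho_\gamma(X_m)u\|_{A^2_\gamma}$, and by unitarity (up to cocycle-derivative terms)
\begin{equation*}
q_{X_1,\dots,X_m}\bigl(\rho_\gamma(y)\psi\bigr)
=\bigl\|\rho_\gamma\bigl(\mathrm{Ad}(y^{-1})X_1\bigr)\cdots\rho_\gamma\bigl(\mathrm{Ad}(y^{-1})X_m\bigr)\psi\bigr\|_{A^2_\gamma}
\lesssim \|\mathrm{Ad}(y^{-1})\|^{m}\max_{j_1,\dots,j_m}\|\rho_\gamma(X_{j_1})\cdots\rho_\gamma(X_{j_m})\psi\|_{A^2_\gamma}.
\end{equation*}
Since $G$ is semisimple and noncompact, $\|\mathrm{Ad}(y^{-1})\|$ grows exponentially in $y$, so the growth rate of the $m$-th order seminorm of $\rho_\gamma(y)\psi$ increases without bound as $m$ increases, whereas the decay of $|W^\gamma_\phi(\eta)(y^{-1}x)|\le C\,h^{\gamma/2}*h^{\gamma/2}(y^{-1}x)$ is a \emph{fixed} exponential rate determined by $\gamma$. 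Concretely, for $G=SU(1,1)$ and $y=\exp(tH)$ along the Cartan direction, the seminorm grows like $e^{2mt}$, the kernel decays no faster than $e^{-\gamma t}$, and the Haar measure contributes $e^{2t}\,dt$; for $2m+2>\gamma$ the majorant integral diverges. Thus $\Phi_x$ exists only as a weak integral in $A^2_\gamma$, and a general $f\in S_\gamma^*$ is not continuous with respect to $\|\cdot\|_{A^2_\gamma}$, so the interchange $\langle f,\int_G(\cdots)\,dy\rangle=\int_G\langle f,(\cdots)\rangle\,dy$ cannot be justified by your route; equivalence of the coefficient-decay seminorms with the derivative seminorms means switching seminorm families cannot rescue it.

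The paper circumvents precisely this difficulty by never letting derivatives fall on the translated vector. It truncates the convolution integral with compactly supported cutoffs $\psi_n$ (Lemma~\ref{lem:growingsmoothfunction}), writes the truncated integral as $\langle f,\rho_\gamma(\Psi_n)\psi\rangle$ with $\Psi_n(y)=\psi_n(y)\overline{\langle\rho_\gamma(y)\eta,\xi\rangle}$, a G\aa rding vector automatically in $S_\gamma$, and then proves $\rho_\gamma(\Psi_n)\psi\to c\,\xi$ \emph{in} $S_\gamma$. The structural point is that differentiating a G\aa rding vector puts the derivative on the scalar coefficient $\Psi_n$: Leibniz produces terms with $X\psi_n$ (uniformly bounded, vanishing pointwise in the limit) and matrix coefficients of the derived vectors $\rho_\gamma(X)\eta$, $\rho_\gamma(X)\xi\in S_\gamma$, all of which retain the fixed decay of Proposition~\ref{matrixdecay}; the $A^2_\gamma$-convergence itself is handled by dominated convergence, the orthogonality relations, and the Kunze--Stein phenomenon. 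No factor $\|\mathrm{Ad}(y^{-1})\|^m$ ever appears. To repair your argument you would have to replace the claimed Bochner integrability in $S_\gamma$ by such a truncation-and-limit scheme (your weak-integral identification of $\Phi_x$ then becomes the computation of the limit, not a step that can be performed directly under the pairing with $f$).
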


\begin{proof}
    First, note that if $W^\gamma_\psi(f)$ is in $L^p_{\alpha-\gamma p/2}(G)$, then
    by Lemma~\ref{lem:twistedconv} the twisted convolution is defined in terms of an integrable function.
  Therefore we can employ the Lebesgue Dominated Convergence Theorem to get that
  \begin{equation}\label{eq:twistedconv}
  W^\gamma_\psi(f)\#W^\gamma_\phi(\eta)(x) = \lim_{n\to\infty} \int_G W^\gamma_\psi(f)(y)W^\gamma_\phi(\eta)(y^{-1}x)\psi_n(y)   \overline{\sigma_\gamma(y,y^{-1}x)}\,dy,
  \end{equation}
  where $\psi_n\in C^\infty_c(G)$ is a sequence of functions which are
  equal to one on growing compact sets whose union is $G$ and which satisfy
  $0\leq \psi_n(x)\leq 1$ for all $x\in G$.
  The equation (\ref{eq:twistedconv}) rewrites to
  $$
  W^\gamma_\psi(f)\#W^\gamma_\phi(\psi)(x) 
  = \lim_{n\to\infty} \int_G \psi_n(y)\langle f,\rho_\gamma(y)\psi \rangle \langle \rho_\gamma(y)\eta,\rho_\gamma(x)\phi\rangle\,dy.
  $$
  Replace $\rho_\gamma(x)\phi$ by a general smooth vector
  $\xi$ and 
  define the smooth compactly supported function
  $$
  \Psi_n(y) = \psi_n(y) \overline{\langle \rho_\gamma(y)\eta ,\xi \rangle},
  $$
  then
  $$
  \int_G \psi_n(y)\langle f,\rho_\gamma(y)\psi \rangle \langle \rho_\gamma(y)\eta,\xi\rangle\,dy
  = 
  \int_G \langle f,\Psi_n(y)\rho_\gamma(y)\psi\rangle\, dy.
  $$
  Since $\Psi_n$ is smooth and compactly supported,   by
  Theorem 3.27 in \cite{Rudin1991} the vector
  $$
  \rho_\gamma(\Psi_n)\psi := \int_G \Psi_n(y)\rho_\gamma(y)\psi\, dy
  $$
  is a smooth vector, and
  $$
\int_G \langle f,\Psi_n(y)\rho_\gamma(y)\psi\rangle\, dy
 = \langle f, \rho_\gamma(\Psi_n)\psi \rangle.
 $$
 In order to finish the proof it suffices to show that the
 latter expresson converges to
 $c\langle f,\xi\rangle$
 where $c=d^{-1}_\gamma \langle \psi,\eta \rangle$.
 We will do so by demonstrating that 
 the vectors $\rho_\gamma(\Psi_n)\psi$ converge to
 $c \xi$
 in $S_\gamma$.

 Let us first verify convergence in $A^2_\gamma$.
 $$
 \| \rho_\gamma(\Psi_n)\psi - c\xi  \|^2_{A^2_\gamma}
 =
 \| \rho_\gamma(\Psi_n)\psi\|^2_{A^2_\gamma} + |c|^2\|  \xi\|^2_{A^2_\gamma}
 - (\overline{c}\langle \rho_\gamma(\Psi_n)\psi,  \xi  \rangle
 - c\langle \xi  , \rho_\gamma(\Psi_n)\psi \rangle).
 $$
 By the Lebesgue dominated convergence theorem and square integrability 
 of the representation, and the unimodularity of the group, we have that
 $$
    \lim_{n\to\infty}\langle \rho_\gamma(\Psi_n)\psi,  \xi   \rangle
    = \lim_{n\to\infty} \int \psi_n(y)
    \langle \xi,\rho_\gamma(y)\eta \rangle \langle \rho_\gamma(y)\psi,\xi \rangle \, dy
    = \int \langle \xi,\rho_\gamma(y)\eta \rangle \langle \rho_\gamma(y)\psi,\xi \rangle \, dy
    = c \| \xi\|^2_{A^2_\gamma}.
    $$
    Therefore we just need to check that
    $$
    \lim_{n\to\infty}
    \| \rho_\gamma(\Psi_n)\psi\|_{A^2_\gamma}^2 =  |c|^2\|  \xi\|^2_{A^2_\gamma}.
    $$
    Notice that by Fubini we have
    $$
    \| \rho_\gamma(\Psi_n)\psi\|^2_{A^2_\gamma}
    = \int_G \int_G \psi_n(x)\psi_n(y)
    \langle \xi,\rho_\gamma(x)\eta\rangle
    \langle \rho_\gamma(x)\psi ,\rho_\gamma(y)\psi\rangle
    \langle \rho_\gamma(y)\eta,\xi \rangle \, dx\, dy,
    $$
    and we will be able to apply the Lebesgue dominated convergence theorem if
    we can show that the function
    $|\langle \xi,\rho_\gamma(x)\eta\rangle
    \langle \rho_\gamma(x)\psi ,\rho_\gamma(y)\psi\rangle
    \langle \rho_\gamma(y)\eta,\xi \rangle|$ is integrable on $G\times G$.
    This is the same as showing that the integral
    $$
    \int_G \int_G |W^\gamma_\eta(\xi)(y)||W^\gamma_\psi(\psi)(y^{-1}x)||W^\gamma_\eta(\xi)(x)| \,dx\,dy
    $$
    is finite.
    From Proposition~\ref{matrixdecay}
    it is known that $W^\gamma_\eta(\xi)\in L^2(G)$ and 
    $W^\gamma_\psi(\psi)\in L^p(G)$
    for some $1<p<2$, so the Kunze-Stein phenomenon
    \cite{Cowling1978} tells us that the integral is finite since
    $|W^\gamma_\eta(\xi)|*|W^\gamma_\psi(\psi)|$ is again in $L^2(G)$.
    Therefore
    $$
      \lim_{n\to\infty}
      \| \rho_\gamma(\Psi_n)\psi\|_{A^2_\gamma}^2
    = \int_G \int_G \langle \xi,\rho_\gamma(x)\eta\rangle
    \langle \rho_\gamma(x)\psi ,\rho_\gamma(y)\psi\rangle
    \langle \rho_\gamma(y)\eta,\xi \rangle \, dx\, dy
    =  |c|^2\|  \xi\|^2_{A^2_\gamma}
    $$
    by the orthogonality relations \cite[Theorem 3]{Aniello2006}.
   
    We can now repeat the argument with derivatives of the vector
    $\rho_\gamma(\Psi_n)\psi$ to show it converges to derivatives of
    $c\eta$.
    If $X\in\mathfrak{g}$ then
    define $\rho_\gamma(X) = \lim_{t\to 0}\frac{1}{t} (\rho_\gamma(e^{tX})-I).$
    We have
    $$
    \rho_\gamma(X)\rho_\gamma(\Psi_n)\psi
    = W_\psi^\gamma (\eta)(y) X\psi_n(y)  +
     W_\psi^\gamma(\rho_\gamma(X)\eta)\psi_n(y).
    $$
    As before, it follows from the Lebesgue dominated
    convergence theorem and the assumption that $X\psi_n$ is
    uniformly bounded in $n$, that
    $\rho_\gamma(X)\rho_\gamma(\Psi_n)\psi$ converges to $c \rho_\gamma(X)\eta$
    in $A^2_\gamma$. This argument can be repeated to show that
    $\rho_\gamma(\Psi_n)\psi$ converges to $c\eta$ in $S_\gamma$.
\end{proof}

\begin{remark}
  If one chooses to define the twisted convolution of $W_\psi(f)$ and $W_\phi(\eta)$ 
  using the expression \eqref{eq:twistedconv}, then the proof in fact shows
  that it equals $d_\gamma^{-1} \langle \eta,\psi \rangle_{A^2_\gamma} W^\gamma_\phi(f)$
  regardless of the integrability condition.
  This could be a key observation for applying coorbit theory in this setting for dealing
  with the extended range of $\alpha$.
\end{remark}

We have gathered all the results needed to finish the proof of Theorem~\ref{thm:waveletcharbergman}.
First, we know that $f\in A^p_\alpha$ if and only if $f\in S_\gamma^*$ and
$W^\gamma_1(f)\in L^p_{\alpha-\gamma p/2}(G)$. Employ
Proposition~\ref{prop:swapvector}
to show that for an arbitrary vector $\psi\in S_\gamma$ we have
$W^\gamma_\psi(f) = C W_1(f)\#W_\psi(1)$ for non-zero constant $C$.
By Lemma~\ref{lem:twistedconv} we get that
$$
\| W^\gamma_\psi(f)\|_{L^p_{\alpha-\gamma p/2}(G)} = C \| W_1(f)\#W_\psi(1)\|_{L^p_{\alpha-\gamma p/2}(G)}
\leq C' \| W_1(f)\|_{L^p_{\alpha-\gamma p/2}(G)}.
$$
Therefore $W^\gamma_\psi(f)$ is in $L^p_{\alpha-\gamma p/2}(G)$.

If, on the other hand, $W^\gamma_\psi(f)$ is in $L^p_{\alpha-\gamma p/2}(G)$, then
Lemma~\ref{lem:twistedconv} and Proposition~\ref{prop:swapvector} tell us that
$$\| W^\gamma_1(f)\|_{L^p_{\alpha-\gamma p/2}(G) } 
= D \| W_\psi(f)\#W_1(\psi)\|_{L^p_{\alpha-\gamma p/2}(G) }
\leq D' \| W_\psi(f)\|_{L^p_{\alpha-\gamma p/2}(G) },$$
where $D$ is a non-zero constant.
By the first part of Theorem~\ref{thm:waveletcharbergman} we
get that $f$ is in $A^p_\alpha$.
These calculations also show that the norms are equivalent, and the proof is done.

\section{Proof of Theorem~\ref{thm:atomicdecompbergman}}
Normalize $\psi\in S_\gamma$ such that
$W^\gamma_\psi(\psi)\# W^\gamma_\psi(\psi) = W^\gamma_\psi(\psi)$.
Then $L^{p}_{\alpha-\gamma p/2}(G)\# W^\gamma_\psi(\psi)$
is a non-zero closed Banach subspace of $L^{p}_{\alpha-\gamma p/2}(G)$.
This follows from Corollary~\ref{cor:boundedconv} which implies
that $F\mapsto F\# W^\gamma_\psi(\psi)$ is a bounded projection on
$L^{p}_{\alpha-\gamma p/2}(G)$.
By Theorem~\ref{thm:waveletcharbergman} 
the space $L^{p}_{\alpha-\gamma p/2}(G)\# W^\gamma_\psi(\psi)$
is isomorphic to $A^p_\alpha(D)$ via the wavelet transform.
Therefore, if $f\in A^p_\alpha(D)$ we have the following
integral representation of $W_\psi^\gamma(f)$:
$$
W_\psi^\gamma(f) = W_\psi^\gamma(f)\# W^\gamma_\psi(\psi).
$$

The atomic decomposition in Theorem~\ref{thm:atomicdecompbergman} will
follow from a discretization of this integral representation.
This approach is standard in coorbit theory
for integrable representations \cite{Feichtinger1988,Feichtinger1989}
and has been extended to non-integrable projective
representations via estimates
involving the smoothness of the kernel $W^\gamma_\psi(\psi)$
in \cite{Christensen2019}. We refer to these papers for details.

The first thing we note is that the space $S_\gamma$ is the space of
smooth vectors for the representation $\pi_\gamma$ (see \cite{Chebli2004}).
Therefore the space $S_\gamma$ is invariant under the differential operator
$$
\pi_\gamma(X)u = \frac{d}{dt}\Big|_{t=0} \pi_\gamma(e^{-tX})u
$$
where $X$ is in the Lie algebra $\mathfrak{g}$ of $G$.
This means that left and right derivatives
(as defined in \cite{Christensen2019}) of a wavelet coefficient $W^\gamma_u(v)$,
where $u$ and $v$ are in $S_\gamma$,
will correspond to wavelet coefficients $W^\gamma_{u'}(v')$ where $u',v'\in S_\gamma$.
Using Theorem~\ref{thm:convoperator} and
Corollary~\ref{cor:boundedconv}, while noticing that
$|W^\gamma_1(1)| = h^{\gamma/2}$, we
realize that
$L^p_{\alpha-\gamma p/2}(G)*|W^\gamma_{u'}(v')| \subseteq L^p_{\alpha-\gamma p/2}(G)$
for the specified range of parameters when $u'$ and $v'$ are in $S_\gamma$.
This implies that  the solid Banach function space $L^p_{\alpha-\gamma p/2}(G)$
and the vector $\psi\in S_\gamma$ satisfy
\cite[Assumption 3]{Christensen2019},
and then \cite[Theorem 8]{Christensen2019} can be applied as described below.

Let $X_1,\dots,X_n$ be a fixed basis for the Lie algebra $\mathfrak{g}$ of $G$.
For $\epsilon>0$ define the compact neighborhood $U_\epsilon$ of the identity by
$$
U_\epsilon = \{ \exp(t_1X_1)\dots\exp(t_nX_n) \mid -\epsilon \leq t_k
\leq \epsilon \text{ for all } k=1,\dots, n\}.
$$
We will choose the cocycle $\sigma$ and an $\epsilon$ 
small enough such that $\sigma$ is $C^\infty$ on
$U_\epsilon\times U_\epsilon$ (this is always possible due
to \cite[Lemma 7.20]{Varadarajan1985}).
Assume that $x_i$ in $G$ satisfy that $x_iU_\epsilon$ cover $G$,
and that there is a compact neighborhood $V_\epsilon\subseteq U_\epsilon$
such that the $x_iV_\epsilon$ are pairwise disjoint.
Let $\{ \psi_i\}$ be a bounded uniform partition of unity satisfying that
(i) $0\leq \psi_i\leq 1$, (ii) $\sum_i \psi_i = 1$ and (iii) $\mathrm{supp}(\psi_i)\subseteq x_iU_\epsilon$. The existence of the points
  $x_i$ and partitions of unity was proved in \cite{Feichtinger1981}.
Define the operator
$S:L^{p}_{\alpha-\gamma p/2}(G)\# W^\gamma_\psi(\psi)\to L^{p}_{\alpha-\gamma p/2}(G)\# W^\gamma_\psi(\psi)$
by
$$
SF = \sum_i \widetilde{\lambda}_i(F)\ell_{x_i}^{\sigma_\gamma} W^\gamma_\psi(\psi),
$$
where the functionals $\widetilde{\lambda}_i$ are given by
$$
\widetilde{\lambda}_i(F) = \int F(y)\psi_i(y) \overline{\sigma_{\gamma}(y,y^{-1}x_i)}\,dy.
$$
This operator is well-defined, and 
it is possible to choose $\epsilon$ small enough 
that it is invertible. In that
case $F$ in $L^{p}_{\alpha-\gamma p/2}(G)\# W^\gamma_\psi(\psi)$ can
be reconstructed by
$$F=\sum_i \lambda_i(S^{-1}F) \ell_{x_i}^{\sigma_\gamma} W^\gamma_\psi(\psi).$$
Notice, that since $C_c(G)$ are dense in $L^p_{\alpha -\gamma p/2}(G)$ for
$1\leq p <\infty$, this sum converges in norm (see Theorem 8 in \cite{Christensen2019}).
Since $W_\psi^\gamma:A^p_\alpha(D)\to L^{p}_{\alpha-\gamma p/2}(G)\# W^\gamma_\psi(\psi)$ is an
isomorphism we get that
$$
W_\psi^\gamma (f)
= \sum_i \widetilde{\lambda}_i(S^{-1}W_\psi^\gamma(f)) \ell_{x_i}^{\sigma_\gamma} W^\gamma_\psi(\psi)
= W_\psi^\gamma \left( \sum_i \widetilde{\lambda}_i(S^{-1}W_\psi^\gamma(f)) \rho_\gamma (x_i) \psi \right).
$$
for $f\in A_\alpha^p(D)$.
This proves the decomposition
$$
f
= \sum_i \lambda_i(f) \rho_\gamma (x_i) \psi
$$
when the functionals $\lambda_i:A^p_\alpha(D)\to \mathbb{C}$ are defined by
$\lambda_i(f) = \widetilde{\lambda}_i(S^{-1}W_\psi^\gamma(f))$.
According to Theorem 8 in \cite{Christensen2019}
this is an atomic decomposition with sequence space given by the norm
$$\| \{ c_i\}\|
= \left(\int_G \left|\sum_i c_i 1_{x_iU_\epsilon}(x)\right|^p h^{\alpha-\gamma p/2}(x) \, dx\right)^{1/p}.$$
This norm is equivalent to the sequence norm used in Theorem~\ref{thm:atomicdecompbergman}, since there are
constants $0<C_1 < C_2$ such that
$C_1h(x) \leq h(xu)\leq C_2h(x)$ when $u\in U_\epsilon$, see Lemma~\ref{lem:2.1}.

\section{The unbounded case}

In this section we explain how the atomic
decompositions for the unbounded realization of the
domain can be obtained.
The idea is simply to use the Caley transform to transform
information from the bounded domain to the unbounded realization. In particular, the representation theory makes the transition
from the bounded to the unbounded case clear.
Let $c:U\to D$ be the
Cayley transform  from the irreducible unbounded symmetric domain $U$ to the
irreducible bounded symmetric domain $D$. The Cayley transform
  is explicity described in Section 6 in \cite{Koranyi1965}.
Note that there exists
an element $c\in G_\C$, where $G_\C$ is the simply connected complex Lie group with
Lie algebra $\mathfrak{g}_\C$ such that $c(z)=cz$.
For the special case of the unit disc $D=\{z\in\mathbb{C} \mid |z|<1\}$,
the Cayley transform is given by $z\mapsto i\frac{z+1}{-z+1} =
\begin{pmatrix}
  i & i \\ -1 & 1 
\end{pmatrix}\cdot z$.
The Bergman space
$A^2(U)$ is defined as the holomorphic functions for which
$$
\| f\|_{A^2(U)}^2 = \int |f(z)|^2\,dz <\infty.
$$
Here $dz$ denotes the Euclidean measure on $\C^n$ which gives
the unit cube volume 1.
The Bergman space is a reproducing kernel Hilbert space with
reproducing kernel denoted $K_U$ satisfying
$$
f(z) = \int_U f(w)K_U(z,w)\,dz.
$$
Let $J(c,w)$ denote the complex Jacobian of the mapping $c:U\to D$, then
 $C: A^2(D)\to A^2(U)$ defined by $Cf(z)=J(c,z)f(cz)$
is an isometry.
Also, remembering that the Bergman kernel on the bounded realization is
denoted $K$, we have
$$
J(c,z)\overline{J(c,w)} K(cz,cw) = K_U(z,w).
$$
Define the functions $h_U(z,w) = K_U(z,w)^{-1/g}$ and $h_U(z) = h_U(z,z)$,
then we have
$$
h_U(z) = |J(c,z)|^{-2/g} h(cz).
$$
Now define the weighted Bergman space to be the space
of holomorphic functions on $U$ for which
$$
\|f\|_{A^p_\alpha(U)}^p = \int_U |f(z)|^p h_U(z)^{\alpha-g}\,dz < \infty.
$$
Then
$C_{\alpha,p}:A^p_\alpha(D)\to A^p_\alpha(U)$ given
by
$$
C_{\alpha,p} f(z) = J(c,z)^{\frac{2\alpha}{pg}} f(cz)
$$
is an isometry
with inverse
$$
C_{\alpha,p}^{-1}f(w) = J(c^{-1},w)^{\frac{2\alpha}{pg}}f(c^{-1}w).
$$

We can then define a projective representation $\tau_\gamma$ of $G$ on
$A^2_\gamma(U)$ by
$$
\tau_\gamma(x) = C_{\gamma,2} \pi_\gamma(x) C_{\gamma,2}^{-1}.
$$
This projective representation is irreducible, unitary and square integrable,
and the smooth vectors are $T_\gamma = C_{\gamma,2}S_\gamma$ with
dual $T_\gamma^* = (C_{\gamma,2}^{-1})^* S_\gamma^*$ where the
adjoint is defined in terms of the weak dual pairing.
Define the wavelet transform of $f\in T^*_\gamma$ and
$\psi \in T_\gamma$ by
$$
W_\psi^\gamma(f)(x) = \langle f, \tau_\gamma(x)\psi \rangle.
$$
We can now restate our main theorems in the unbounded setting:
\begin{theorem}
  \label{thm:waveletcharunbounded}
  A function $f$ is in the Bergman space $A^p_\alpha(U)$
  if and only if $f\in T_\gamma^*$ and 
  $W_\psi^\gamma (f) \in L^p_{\alpha -\gamma p/2}(G)$
  and either of the following two conditions are satisfied
  \begin{enumerate}
  \item $\gamma,\alpha >g-1$ and
    and $\psi=C_{\gamma,2}1$,
  \item $\gamma > g-1 +(r-1)\frac{a}{2}$ and
    $g-1 - (r-1)\frac{a}{2} + p(r-1)\frac{a}{2} 
    < \alpha 
    < g-1 -(r-1)\frac{a}{2} + p(\gamma-g+1)$ and $\psi \in T_\gamma$ is non-zero.
  \end{enumerate}
  Moreover, the norms $\| f\|_{A^p_\alpha(U)}$ and
  $\| W^\gamma_\psi(f)\|_{L^p_{\alpha-\gamma p/2}}(G)$ are equivalent.
\end{theorem}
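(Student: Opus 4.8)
The plan is to reduce everything to the bounded case via the Cayley transform, exploiting that $C_{\gamma,2}$ intertwines $\pi_\gamma$ with $\tau_\gamma$ while $C_{\alpha,p}$ is an isometry of the Bergman spaces. First I would fix $\psi\in T_\gamma$ and set $\psi'=C_{\gamma,2}^{-1}\psi\in S_\gamma$; in case (1) the hypothesis $\psi=C_{\gamma,2}1$ gives exactly $\psi'=1$, matching the constant-function requirement of Theorem~\ref{thm:waveletcharbergman}(1), and in case (2) $\psi'$ is an arbitrary non-zero vector of $S_\gamma$, matching Theorem~\ref{thm:waveletcharbergman}(2). Given $f$, I would set $g=C_{\alpha,p}^{-1}f$, which by the isometry property of $C_{\alpha,p}$ satisfies $\|f\|_{A^p_\alpha(U)}=\|g\|_{A^p_\alpha(D)}$, and in particular lies in $A^p_\alpha(D)$ precisely when $f\in A^p_\alpha(U)$. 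Since the parameter conditions in cases (1) and (2) are word for word those of Theorem~\ref{thm:waveletcharbergman}, the two cases will follow simultaneously.

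The central step is the intertwining identity $W_\psi^\gamma(f)=W_{\psi'}^\gamma(g)$ relating the unbounded wavelet transform to the bounded one. Because $\tau_\gamma(x)\psi=C_{\gamma,2}\pi_\gamma(x)\psi'$ and the dual pairing on $T_\gamma^*\times T_\gamma$ is transported from the pairing on $S_\gamma^*\times S_\gamma$ through $(C_{\gamma,2}^{-1})^*$, the identity $\langle f,\tau_\gamma(x)\psi\rangle=\langle g,\pi_\gamma(x)\psi'\rangle$ holds once $f\in A^p_\alpha(U)$ is identified with $g\in A^p_\alpha(D)\subseteq S_\gamma^*\cong T_\gamma^*$. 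The point requiring care is that the Bergman isometry $C_{\alpha,p}$ carries the twist $2\alpha/(pg)$ whereas $\tau_\gamma$ is built from the twist $\gamma/g$; I would verify directly, by changing variables $w=cz$ and using $h(cz)=|J(c,z)|^{2/g}h_U(z)$ together with $J(c^{-1},cz)=J(c,z)^{-1}$, that the resulting discrepancy $|J(c,z)|^{(2\alpha-p\gamma)/g}$ is exactly the factor absorbed by transporting $f$ through $C_{\alpha,p}$ rather than through $C_{\gamma,2}$. This is the only place where the non-compactness of $U$ and the unboundedness of $J(c,\cdot)$ could cause trouble, and it is the main obstacle: one must confirm that the correct embedding $A^p_\alpha(U)\hookrightarrow T_\gamma^*$ for the theorem to hold is the Bergman-isometry embedding, not the naive $A^2_\gamma(U)$-integral embedding.

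With the intertwining identity in hand the conclusion is immediate. Applying Theorem~\ref{thm:waveletcharbergman} to $g=C_{\alpha,p}^{-1}f$ in the stated parameter ranges shows that $g\in A^p_\alpha(D)$ if and only if $W_{\psi'}^\gamma(g)\in L^p_{\alpha-\gamma p/2}(G)$, with $\|g\|_{A^p_\alpha(D)}$ equivalent to $\|W_{\psi'}^\gamma(g)\|_{L^p_{\alpha-\gamma p/2}(G)}$. Substituting $W_\psi^\gamma(f)=W_{\psi'}^\gamma(g)$ and $\|f\|_{A^p_\alpha(U)}=\|g\|_{A^p_\alpha(D)}$ then gives that $f\in A^p_\alpha(U)$ if and only if $f\in T_\gamma^*$ and $W_\psi^\gamma(f)\in L^p_{\alpha-\gamma p/2}(G)$, together with the asserted equivalence of $\|f\|_{A^p_\alpha(U)}$ and $\|W_\psi^\gamma(f)\|_{L^p_{\alpha-\gamma p/2}(G)}$, completing the proof in both cases.
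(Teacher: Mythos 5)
Your reduction is, in its mechanics, what the paper itself intends: Section 7 contains no independent proof of Theorem~\ref{thm:waveletcharunbounded}; the definitions of $\tau_\gamma$, $T_\gamma$, $T_\gamma^*$ are set up precisely so that the bounded results transfer, and your bookkeeping ($\psi'=C_{\gamma,2}^{-1}\psi$, $g=C_{\alpha,p}^{-1}f$, then Theorem~\ref{thm:waveletcharbergman} plus the isometry) is correct. The problem is that the point you label as merely ``requiring care'' is the entire content of the statement, and your resolution does not verify the theorem so much as change what it says. A holomorphic $f$ on $U$ can sit inside $T_\gamma^*$ in two inequivalent ways: canonically, via the pairing extending the $A^2_\gamma(U)$ inner product (this is what the paper's definitions of $T_\gamma^*$ and of $W_\psi^\gamma$ produce, and it corresponds to $F=C_{\gamma,2}^{-1}f\in S_\gamma^*$), or via your assignment $F=C_{\alpha,p}^{-1}f$. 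The two differ by multiplication by $J(c^{-1},\cdot)^{(p\gamma-2\alpha)/(pg)}$, which is unbounded on $D$, and they yield different theorems.

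Under the canonical reading the statement is actually false whenever $p\gamma\neq 2\alpha$: applying \eqref{eq:isometry} to $F=C_{\gamma,2}^{-1}f$ and changing variables $z=cw$ gives
\begin{equation*}
\|W^\gamma_{C_{\gamma,2}1}(f)\|^p_{L^p_{\alpha-\gamma p/2}(G)}
=\int_D |F(z)|^p h(z)^{\alpha-g}\,dz
=\int_U |f(w)|^p h_U(w)^{\alpha-g}\,|J(c,w)|^{(2\alpha-p\gamma)/g}\,dw ,
\end{equation*}
so the unbounded weight $|J(c,\cdot)|^{(2\alpha-p\gamma)/g}$ does not disappear. Concretely, on the upper half-plane ($g=2$) with $p=2$, $\alpha=2$, $\gamma=4$, the function $f(z)=(z+i)^{-2}$ lies in $A^2_2(U)$, yet $C_{\gamma,2}^{-1}f(w)=(1-w)^{-2}\notin A^2_2(D)$, so $W_\psi^\gamma(f)\notin L^2_{-2}(G)$. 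Hence your claim that the Bergman-isometry embedding is ``the correct embedding for the theorem to hold'' is accurate, but it is a disambiguation (really a correction) of the statement, not a step in its proof, and it carries a cost you do not acknowledge: your identification depends on $(\alpha,p)$, and pushing the discretization of Theorem~\ref{thm:atomicdecompbergman} through it produces, as functions on $U$, the atoms $C_{\alpha,p}\pi_\gamma(x_i)\psi'=J(c,\cdot)^{(2\alpha-p\gamma)/(pg)}\,\tau_\gamma(x_i)\psi$ --- exactly the twisted decomposition that part (2) of the remark following Theorem~\ref{thm:atomicdecompunbounded} contrasts unfavourably with Theorem~\ref{thm:atomicdecompunbounded} --- rather than the $\tau_\gamma$-translates that theorem asserts; genuine $\tau_\gamma$-translates with $\ell^p_{\alpha-\gamma p/2}$ coefficients decompose $C_{\gamma,2}A^p_\alpha(D)$, which for $p\gamma\neq 2\alpha$ is not $A^p_\alpha(U)$. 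In short: your argument is valid for the reading you adopt, and that is the only reading under which Theorem~\ref{thm:waveletcharunbounded} is true, but it is not the reading the paper's own use of the theorem requires; the mismatch between $C_{\gamma,2}$ and $C_{\alpha,p}$ is genuine and is relocated, not absorbed.
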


Also we have
\begin{theorem}
  \label{thm:atomicdecompunbounded}
  Assume that $\gamma>g-1+(r-1)\frac{a}{2}$ and $\psi\in T_\gamma$ is non-zero.
  If  
  \[g-1 +(p-1)(r-1)\frac{a}{2} 
    < \alpha 
    < g-1 + p(\gamma-g+1)-(r-1)\frac{a}{2},\]
  there is a countable discrete collection of points
  $\{ x_i\}_{i \in I}$ in $G$
  and associated functionals $\{ \lambda_i\}$ on
  $A^p_\alpha(U)$ 
  such that every $f\in A^p_\alpha(U)$ can be written
  $$
  f = \sum_{i\in I} \lambda_i(f) \rho_\gamma(x_i)\psi
  $$
  with $\| \{ \lambda_i(f)\}\|_{\ell^p_{\alpha-\gamma p/2}}
  \leq \| f\|_{L^p_\alpha(U)}$.
  Moreover, if $\{ c_i\} \in \ell^p_{\alpha-\gamma p/2}$ then
  $$
  g := \sum_{i\in I} c_i \rho_\gamma(x_i)\psi
  $$
  is in $A^p_\alpha(U)$ and 
  $\| g\|_{A^p_\alpha(U)} \leq \| \{ \lambda_i(g)\}\|_{\ell^p_{\alpha-\gamma p/2}}$.
\end{theorem}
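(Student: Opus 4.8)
The plan is to transfer the proof of Theorem~\ref{thm:atomicdecompbergman} from the bounded realization $D$ to the unbounded realization $U$ essentially verbatim, by observing that the entire discretization machinery is carried out on the group $G$ and depends only on the parameters $p,\gamma,\alpha$ and on the reproducing kernel $W^\gamma_\psi(\psi)$, none of which change. The one ingredient that has to be re-established is the decay of the matrix coefficients (Proposition~\ref{matrixdecay}), and the key point is that these estimates are not merely similar but literally identical in the two realizations, because the relevant wavelet coefficients coincide.

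First I would record this coincidence. Since $\tau_\gamma(x)=C_{\gamma,2}\pi_\gamma(x)C_{\gamma,2}^{-1}$ and $C_{\gamma,2}\colon A^2_\gamma(D)\to A^2_\gamma(U)$ is unitary, conjugation by the fixed unitary $C_{\gamma,2}$ preserves both the cocycle $\sigma_\gamma$ (so the twisted convolution $\#_\gamma$ and the translations $\ell^{\sigma_\gamma}_{x}$ are unchanged) and the formal dimension $d_\gamma$. Writing $u=C_{\gamma,2}\widetilde u$ and $v=C_{\gamma,2}\widetilde v$ with $\widetilde u,\widetilde v\in S_\gamma$, the intertwining relation $\tau_\gamma(x)C_{\gamma,2}=C_{\gamma,2}\pi_\gamma(x)$ together with the fact that $C_{\gamma,2}$ preserves the (dual) pairing yields
\[
  W^\gamma_u(v)(x)=\langle v,\tau_\gamma(x)u\rangle
  =\langle C_{\gamma,2}\widetilde v,\,C_{\gamma,2}\pi_\gamma(x)\widetilde u\rangle
  =\langle \widetilde v,\pi_\gamma(x)\widetilde u\rangle
  =W^\gamma_{\widetilde u}(\widetilde v)(x).
\]
In particular $|W^\gamma_u(v)|=|W^\gamma_{\widetilde u}(\widetilde v)|$, so Proposition~\ref{matrixdecay} applies without change and gives $|W^\gamma_u(v)|\le C\,h^{\gamma/2}*h^{\gamma/2}$ for all $u,v\in T_\gamma$.

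With this estimate in hand, every group-level statement used in the bounded proof transfers immediately: Theorem~\ref{thm:convoperator} and Corollary~\ref{cor:boundedconv} show that convolution with $|W^\gamma_u(v)|$ is bounded on the solid space $L^p_{\alpha-\gamma p/2}(G)$ throughout the stated range, so $L^p_{\alpha-\gamma p/2}(G)$ and $\psi\in T_\gamma$ satisfy \cite[Assumption 3]{Christensen2019}. By Theorem~\ref{thm:waveletcharunbounded} the transform $W^\gamma_\psi$ is an isomorphism of $A^p_\alpha(U)$ onto the closed coorbit subspace $L^p_{\alpha-\gamma p/2}(G)\#W^\gamma_\psi(\psi)$, with the reproducing identity $W^\gamma_\psi(f)=W^\gamma_\psi(f)\#W^\gamma_\psi(\psi)$ after normalizing $\psi$ as in the proof of Theorem~\ref{thm:atomicdecompbergman}. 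Applying \cite[Theorem 8]{Christensen2019} with the same points $\{x_i\}$, neighborhood $U_\epsilon$, and partition of unity $\{\psi_i\}$ produces the reconstruction of $W^\gamma_\psi(f)$ from the samples $\ell^{\sigma_\gamma}_{x_i}W^\gamma_\psi(\psi)=W^\gamma_\psi(\tau_\gamma(x_i)\psi)$; transporting back through the isomorphism $W^\gamma_\psi$ gives $f=\sum_i\lambda_i(f)\tau_\gamma(x_i)\psi$, and the sequence-norm equivalence follows from Lemma~\ref{lem:2.1}(f) exactly as in the bounded case.

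The main obstacle to watch is the mismatch between the isometry $C_{\alpha,p}$ that identifies $A^p_\alpha(D)$ with $A^p_\alpha(U)$ and the unitary $C_{\gamma,2}$ that intertwines the representations: the two differ by the holomorphic multiplier $J(c,\cdot)^{2\alpha/(pg)-\gamma/g}$. A direct attempt to transport the bounded-domain series by applying $C_{\alpha,p}$ would drag this extra factor onto each atom and destroy the clean form $\tau_\gamma(x_i)\psi$. The resolution, and the reason the argument is run entirely on $G$, is that $A^p_\alpha(D)$ and $A^p_\alpha(U)$ map via their respective wavelet transforms onto the \emph{same} coorbit subspace of $L^p_{\alpha-\gamma p/2}(G)$; the discretization therefore takes place downstairs on the group, where no such multiplier appears, and the atoms re-emerge correctly as $\tau_\gamma(x_i)\psi$ upon transporting back to $U$.
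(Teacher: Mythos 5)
Your proposal is correct and follows essentially the same route as the paper: Section 6 of the paper is built precisely on the observation that $C_{\gamma,2}$ unitarily intertwines $\pi_\gamma$ and $\tau_\gamma$, so the wavelet coefficients (hence their decay, the cocycle, and the whole group-side discretization via \cite[Theorem 8]{Christensen2019}) are literally the same functions on $G$, and the atoms re-emerge as $\tau_\gamma(x_i)\psi$. Your closing paragraph about why one must not transport the series with $C_{\alpha,p}$ (the multiplier $J(c,\cdot)^{2\alpha/(pg)-\gamma/g}$ would spoil the atoms) is exactly the point the paper itself makes in the remark following Theorem~\ref{thm:atomicdecompunbounded}.
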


\begin{remark}
  (1) In the case where $\psi=C_{\gamma,2}1$ we recover the results by
  Coifman and Rochberg with $\alpha=g$ and
  $\gamma=(2\alpha +\theta g)/p$. Namely
  \begin{align*}
  f(z)
  &= \sum_i \lambda_i(f) \overline{J(c^{-1}x_i,o)^{\gamma/g}}
    K_U^{\gamma/g}(z,c^{-1}x_i\cdot o) \\
  &= \sum_i \lambda_i(f) \overline{J(c^{-1}x_i,o)^{\gamma/g}}
    K_U^{\gamma/g}(z,c^{-1}x_i\cdot o).
  \end{align*}
  Thus representation theory makes clear the transition between atomic
    decomopsitions for the two realizations of the domain.

  (2) One could also use the isometry
  $C_{\alpha,p}:A^p_\alpha(D)\to A^p_\alpha(U)$ to transfer the atomic
  decomposition from the bounded domain to the unbounded realization.
  In particular, if $F\in A^p_\alpha(U)$ and if $\psi$ is in $S_\gamma$, then
  $$
  C_{\alpha,p}^{-1}F = \sum_i \lambda_i(C^{-1}_{\alpha,p}F)\pi_\gamma(x_i)\psi.
  $$
  This sum converges absolutely in $A^p_{\alpha}(D)$, and since
  $\pi_\gamma(x_i)\psi\in S_\gamma\subseteq A^p_{\alpha}(D)$ it follows that 
  $$
  F = \sum_i \lambda_i(C^{-1}_{\alpha,p}F)C_{\alpha,p}\pi_\gamma(x_i)\psi.
  $$
  Finally note that this equals
  $$
  F(z) = J(c,z)^{\frac{2\alpha}{pg}-\frac{\gamma}{g}} \sum_i \lambda_i(C^{-1}_{\alpha,p}F)
  \tau_\gamma(x_i)C_{\gamma,2}\psi(z).
  $$
  Due to the factor $J(c,z)^{\frac{2\alpha}{pg}-\frac{\gamma}{g}}$
  the atoms in this decomposition are not translates under $\tau_\gamma$
  of a single function.
  This makes this decomposition obtained in 
  Theorem~\ref{thm:atomicdecompunbounded} more attractive.
\end{remark}

\end{document}